\newtheorem{theorem}{Theorem}[section]
\theoremstyle{plain}
\newtheorem{corollary}[theorem]{Corollary}
\newtheorem{example}[theorem]{Example}
\newtheorem{lemma}[theorem]{Lemma}
\newtheorem{problem}[theorem]{Problem}
\newtheorem{proposition}[theorem]{Proposition}
\numberwithin{equation}{section}
\title{Moufang loops with commuting inner mappings}
\author{G\'abor P. Nagy}
\email{nagyg@math.u-szeged.hu}
\address{SZTE Bolyai Institute, Department of Geometry,
Aradi v\'ertan\'uk tere 1, H-6720 Szeged, Hungary}
\author{Petr Vojt\v{e}chovsk\'{y}}
\email{petr@math.du.edu}
\address{Department of Mathematics, University of Denver, 2360 S Gaylord St,
Denver, Colorado, 80208, USA}
\keywords{Moufang loop, inner mapping group, commuting inner mappings, nilpotency class three}
\subjclass[2000]{Primary: 20N05.}
\DeclareMathOperator{\inn}{Inn}
\def\cl#1{\mathit{c\ell}(#1)}
\def\rank#1{\mathit{rank}(#1)}
\begin{document}

\begin{abstract}
We investigate the relation between the structure of a Moufang loop and its
inner mapping group. Moufang loops of odd order with commuting inner mappings
have nilpotency class at most two. $6$-divisible Moufang loops with commuting
inner mappings have nilpotency class at most two. There is a Moufang loop of
order $2^{14}$ with commuting inner mappings and of nilpotency class three.
\end{abstract}

\thanks{This paper was written during the first author's Marie Curie Fellowship MEIF-CT-2006-041105
at the University of W\"urzburg (Germany). The second author would like to
thank the University of W\"urzburg for providing a productive environment
during his visit.}

\maketitle

\section{Introduction}

Problems in loop theory are often attacked by tools of group theory, and it is
therefore a question of considerable interest in nonassociative algebra to
understand the relationship between the nilpotency class of a loop $Q$ and the
nilpotency class of its inner mapping group $\inn{Q}$.

When $Q$ is a group, the situation is transparent thanks to the textbook
isomorphism $Q/Z(Q)\cong\inn{Q}$. Hence for a nontrivial group $Q$ we have
\begin{equation}\label{eq:plus1}
    \cl{Q} = \cl{\inn{Q}} + 1.
\end{equation}
There are other varieties of loops, not necessarily associative, for which the
equality \eqref{eq:plus1} is satisfied, for instance in the case of commutative
Moufang loops \cite[Theorem VIII.11.5]{Bruck}.

Does \eqref{eq:plus1} hold for all loops? No, \eqref{eq:plus1} fails badly
already for loops of nilpotency class $3$, as there is a loop $Q$ of order $18$
with $\cl{Q}=3$ for which $\inn{Q}$ is not nilpotent. Such a loop was for the
first time constructed by Vesanen in 1995, and it can be found in \cite{KePh}.

But there are some good news. Bruck observed in  \cite{BruckTAMS} that
$\inn{Q}$ is abelian if $\cl{Q}=2$. Niemenmaa and Kepka showed in \cite{NiKe2}
that a finite loop $Q$ with $\inn{Q}$ abelian is nilpotent, without giving a
bound on $\cl{Q}$. More generally, a recent result of Niemenmaa \cite{Ni}
(based on \cite{Ma}) states that if $Q$ is a finite loop with $\inn{Q}$
nilpotent then $Q$ is nilpotent.

The converse of Bruck's observation does not hold. Using the technique of
connected transversals, Cs\"org\H{o} \cite{Csorgo} constructed a loop of order
$128$ and nilpotency class $3$ with abelian inner mapping group. We will
therefore call loops $Q$ with abelian $\inn{Q}$ satisfying $\cl{Q}>2$
\emph{loops of Cs\"org\H{o} type}.

How prevalent are loops of Cs\"org\H{o} type in loop theory? As we have already
mentioned, there are no loops of Cs\"org\H{o} type in the variety of
commutative Moufang loops. By \cite{CsDr}, there are no left conjugacy closed
loops of Cs\"org\H{o} type. On the other hand, numerous loops of Cs\"org\H{o}
type have been constructed in \cite{DrVo} by a combinatorial approach based on
symmetric trilinear forms. There is a Buchsteiner loop of Cs\"org\H{o} type of
order $128$ \cite{DrKi}.

But until now, no loop of Cs\"org\H{o} type has been found in a mainstream
variety of loops. In this paper we construct a Moufang loop of Cs\"org\H{o}
type of order $2^{14}$ and nilpotency class three. On the other hand, we show
that there is no uniquely $6$-divisible Moufang loop of Cs\"org\H{o} type and
no odd order Moufang loop of Cs\"org\H{o} type.

For a list of open problems concerning loops of Cs\"{o}rg\H{o} type, see
\cite{DrVo}.

Our method is mostly loop theoretical, similar to that of Bruck
\cite{Bruck}---we apply heavy commutator and associator calculus. It is known
that the category of Moufang loops is equivalent to the category of groups with
triality, however, this functorial equivalence is not fully understood, even
when restricted to solvable or nilpotent loops. A translation of the results
obtained here into the language of groups with triality could elucidate the
connection between (nilpotent) Moufang loops and (nilpotent) groups with
triality.

\section{Prerequisites}

\subsection{Notation}

For an element $x$ of a groupoid $Q$, denote by $L_x:Q\to Q$, $yL_x=xy$ the
\emph{left translation by $x$ in $Q$}, and by $R_x:Q\to Q$, $yR_x = yx$ the
\emph{right translation by $x$ in $Q$}.

\emph{Quasigroup} is a groupoid in which all translations are bijections. For
$x$, $y$ in a quasigroup $Q$, we denote by $x\backslash y$ the unique solution
$a\in Q$ to the equation $xa=y$, and by $y/x$ the unique solution $b\in Q$ to
the equation $bx=y$.

\emph{Loop} is a quasigroup $Q$ with \emph{neutral element} $1\in Q$, that is,
$x1=x=1x$ holds for every $x\in Q$. From now on, $Q$ always denotes a loop.

The \emph{inner mapping group} $\inn{Q}$ of $Q$ is the permutation group
generated by all \emph{middle inner mappings} (\emph{conjugations}) $T(x) =
R_xL_x^{-1}$, \emph{left inner mappings} $L(x,y) = L_xL_yL_{yx}^{-1}$, and
\emph{right inner mappings} $R(x,y) = R_xR_yR_{xy}^{-1}$, where $x$, $y\in Q$.

A subloop $H$ of $Q$ is \emph{normal}, $H\unlhd Q$, if $H\varphi = H$ for every
$\varphi\in\inn{Q}$. If $H$ is normal in $Q$, $Q/H$ is the factor loop defined
in the usual way.

The \emph{center} $Z(Q)=\{x\in Q;\;x\varphi = x$ for every $\varphi\in
\inn{Q}\}$ consists of all those elements of $Q$ that commute and associate
with all other elements of $Q$.

The \emph{iterated centra} of $Q$ are defined by $Z_0(Q)=1$, $Z_{i+1}(Q)/Z_i(Q)
= Z(Q/Z_i(Q))$. A loop $Q$ is \emph{nilpotent of class $m$}, $\cl{Q}=m$, if $m$
is the least integer satisfying $Z_m(Q)=Q$.

For $x$, $y\in Q$, let $[x,y]\in Q$ be the \emph{commutator} of $x$ and $y$,
defined by $xy=(yx)[x,y]$. For $x$, $y$, $z\in Q$, let $[x,y,z]\in Q$ be the
\emph{associator} of $x$, $y$ and $z$, defined by $(xy)z = (x(yz))[x,y,z]$.
Note that the commutator and associator are well-defined modulo $Z(Q)$, that
is, $[xc_1,yc_2] = [x,y]$, $[xc_1,yc_2,zc_3]=[x,y,z]$ for every $x$, $y$, $z\in
Q$ and $c_1$, $c_2$, $c_3\in Z(Q)$.

The \emph{associator subloop} $A(Q)$ of $Q$ is the least normal subloop of $Q$
such that $Q/A(Q)$ is a group. Hence $A(Q)$ is the least normal subloop of $Q$
containing all associators $[x,y,z]$. The \emph{commutator-associator subloop}
$Q'$, also called the \emph{derived subloop}, is the least normal subloop of
$Q$ such that $Q/Q'$ is an abelian group. Hence $Q'$ is the least normal
subloop of $Q$ containing all commutators $[x,y]$ and all associators
$[x,y,z]$.

Note that we have $\cl{Q}\le 2$ if and only if $Q/Z(Q)$ is an abelian group,
i.e., $Q'\le Z(Q)$, i.e., $[x,y]\in Z(Q)$ and $[x,y,z]\in Z(Q)$ for every $x$,
$y$, $z\in Q$.

The \emph{nucleus} of a loop $Q$ is the subloop $N(Q)=\{x\in Q;\; [x,y,z] =
[y,x,z] = [y,z,x]=1$ for every $y$, $z\in Q\}$.

We denote by $\langle S\rangle$ the subloop of $Q$ generated by $S\subseteq Q$,
and write $\langle x\rangle$ instead of $\langle\{x\}\rangle$, etc. For a loop
$Q$, let $\rank{Q}$ be the least cardinal $\kappa$ such that $Q$ is generated
by a subset of size $\kappa$.

Let $m>1$ be an integer. A loop $Q$ is \emph{$m$-divisible} if for every $x\in
Q$ there is $y\in Q$ such that $y^m=x$. A loop $Q$ is \emph{uniquely
$m$-divisible} if for every $x\in Q$ there is a unique $y\in Q$ such that
$y^m=x$, that is, if the power map $\varphi:Q\mapsto Q$, $z\mapsto z^m$ is a
bijection of $Q$. In such a case we denote the $m$-th root $x\varphi^{-1}$ of
$x$ by $x^{1/m}$. When $Q$ is finite, it is uniquely $m$-divisible if and only
if it is $m$-divisible.

A loop is said to be \emph{Moufang} if it satisfies the identity $x(y(xz)) =
((xy)x)z$. It is well-known that Moufang loops are \emph{diassociative}, that
is, any two elements generate an associative subloop. In particular, Moufang
loops are \emph{power-associative}, that is, every element generates an
associative subloop. This also means that for every element $x$ of a Moufang
loop $Q$ there is $x^{-1}\in Q$ such that $xx^{-1}=1=x^{-1}x$, and the inverses
satisfy $(xy)^{-1}=y^{-1}x^{-1}$, $x^{-1}(xy) = y$, $(xy)y^{-1}=x$, $[x,y] =
x^{-1}y^{-1}xy$, $[x,y]^{-1} = [y,x]$, and so on.

The famous Moufang theorem \cite{Moufang} states that three elements $x$, $y$,
$z$ of a Moufang loop generate an associative subloop if and only if
$[x,y,z]=1$, in which case $[y,x,z]=[x,z,y]=1$, too. Thus, a Moufang loop $Q$
satisfies $\cl{Q}\le 2$ if and only if $[[x,y],z]=[[x,y,z],u] =
[[x,y],z,u]=[[x,y,z],u,v]=1$ for every $x$, $y$, $z$, $u$, $v\in Q$.

From now on we will employ the \emph{dot-convention}, which uses $\cdot$ to
indicate priority of multiplication. For instance, the product $xy\cdot z$ is
to be read as $(xy)z$.

\subsection{Non-generators}

We say that $x\in Q$ is a \emph{non-generator} of $Q$ if $\langle S\rangle = Q$
whenever $\langle S\cup\{x\}\rangle = Q$.

It follows from \cite[Theorems 2.1 and 2.2]{Bruck} that in a finite nilpotent
loop $Q$ the derived subloop $Q'$ consists of non-generators. This allows us to
ignore commutators and associators in any generating subset $S$ of a finite
nilpotent loop $Q$, as long as $\langle S\rangle=Q$.

To illustrate the technique, if $Q$ is a finite nilpotent loop with
$\rank{Q}=3$ and $x$, $y$, $z$, $u$, $v\in Q$ then $\langle x, y, [x,z],
[y,u,v]\rangle$ is a proper subloop of $Q$. Of course, we are generally not
allowed to remove commutators and associators from sets $S$ that generate a
proper subloop of $Q$. For instance, we cannot conclude that $\langle x,
[x,y]\rangle$ has rank $1$ in the above example.

\subsection{Inner mappings and pseudo-automorphisms of Moufang loops}

A permutation $\varphi$ of a loop $Q$ is a \emph{pseudo-automorphism} if there
exists an element $c\in Q$ such that $(x\varphi)(y\varphi\cdot c) =
(xy)\varphi\cdot c$ for every $x$, $y\in Q$. The element $c$ is then referred
to as a \emph{companion} of $\varphi$.

Note that if $\varphi$ is a pseudo-automorphism with companion $c\in N(Q)$ then
$\varphi$ is an automorphism.

By \cite[Lemma VII.2.2]{Bruck}, the left inner mapping $L(x,y)$ has companion
$[y,x]$, the conjugation $T(x)$ has companion $x^3$, and $R(x,y) =
L(x^{-1},y^{-1})$ for all elements $x$, $y$ of a Moufang loop $Q$. Moreover, by
\cite[Lemma VII.5.4]{Bruck}, we have
\begin{displaymath}
    xL(z,y) = x[x,y,z]^{-1}
\end{displaymath}
in a Moufang loop.

\subsection{Center automorphisms}

An automorphism $\varphi$ of a loop $Q$ is called a \emph{center automorphism}
if for every $x\in Q$ we have $xZ(Q) = (x\varphi)Z(Q)$.

\begin{lemma}\label{lm:centaut}
Let $\varphi$ be a center automorphism of a loop $Q$. Define $\psi:Q\to Q$ by
$x\psi = x\backslash (x\varphi)$. Then $\psi$ is a homomorphism from $Q$ to
$Z(Q)$.
\end{lemma}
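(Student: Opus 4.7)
The plan is to first verify that $\psi$ lands in $Z(Q)$, and then to verify the homomorphism property using the automorphism condition on $\varphi$ combined with the fact that elements of $Z(Q)$ both commute and associate with everything.

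For the first step, the hypothesis that $\varphi$ is a center automorphism means that for each $x\in Q$ there is an element $c_x\in Z(Q)$ with $x\varphi = xc_x$. Because $c_x\in Z(Q)$, it commutes and associates with $x$, so the unique solution of $xa=xc_x$ is $a=c_x$. Hence $x\psi = x\backslash(x\varphi) = c_x \in Z(Q)$, so $\psi$ indeed maps $Q$ into $Z(Q)$.

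For the second step, fix $x,y\in Q$ and write $x\varphi = x\cdot x\psi$, $y\varphi = y\cdot y\psi$ with $x\psi,y\psi\in Z(Q)$. Since $\varphi$ is an automorphism,
\[
(xy)\varphi \;=\; (x\varphi)(y\varphi) \;=\; (x\cdot x\psi)(y\cdot y\psi).
\]
Because $x\psi$ and $y\psi$ lie in $Z(Q)$, they may be freely reassociated and commuted past every element of $Q$; carrying this out (in a couple of elementary steps) rewrites the right-hand side as $(xy)\cdot(x\psi\cdot y\psi)$. By definition of $\psi$, this says $(xy)\psi = x\psi\cdot y\psi$, so $\psi$ is a homomorphism from $Q$ to $Z(Q)$.

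There is no real obstacle here; the only subtle point is to be disciplined about invoking centrality each time a bracket is moved or two factors are swapped, since in a general loop neither associativity nor commutativity is available. Once each rearrangement is justified by the fact that one of the two factors involved is central, the computation is immediate.
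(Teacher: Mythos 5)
Your proof is correct and is essentially identical to the paper's: both verify that $x\backslash(x\varphi)\in Z(Q)$ from the coset condition, then expand $(xy)\varphi=(x\cdot x\psi)(y\cdot y\psi)$ and use centrality of $x\psi$, $y\psi$ to collapse this to $xy\cdot(x\psi\cdot y\psi)$ before cancelling $xy$. No issues.
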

\begin{proof}
Since $xZ(Q) = (x\varphi)Z(Q)$, we have $x\backslash (x\varphi)\in Z(Q)$, and
$\psi$ is a mapping from $Q$ to $Z(Q)$. Now, $xy\cdot (xy)\psi= (xy)\varphi=
x\varphi\cdot y\varphi=x(x\psi)\cdot y(y\psi) = xy\cdot (x\psi\cdot y\psi)$,
where we have used $x\psi$, $y\psi\in Z(Q)$ in the last equality. Thus
$(xy)\psi = x\psi\cdot y\psi$.
\end{proof}

\subsection{The associated Bruck loop}

Let $Q$ be a uniquely $2$-divisible Moufang loop. Define $Q(1/2) = (Q,*)$ by
\begin{displaymath}
    x*y = x^{1/2}yx^{1/2}.
\end{displaymath}
Then, by \cite[Theorem VII.5.2]{Bruck}, $Q(1/2)$ is a power-associative loop
with the same identity and powers as $Q$, and $x*(y*(x*z)) = (x*(y*x))*z$ holds
in $Q(1/2)$. We call $Q(1/2)$ the \emph{Bruck loop associated with $Q$}.

\begin{lemma}\label{lm:commutativeBruck}
Let $Q$ be a uniquely $2$-divisible Moufang loop. Then $Q(1/2)$ is commutative
(hence a commutative Moufang loop) if and only if every subloop $H$ of $Q$ with
$\rank{H}\le 2$ satisfies $\cl{H}\le 2$.
\end{lemma}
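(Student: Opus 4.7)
The plan is to translate the commutativity of $Q(1/2)$ into a commutator identity inside every $2$-generated subloop of $Q$---which is a group by diassociativity of Moufang loops---and then to deduce nilpotency class two via straightforward commutator calculus in that group.

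First I would reformulate. The identity $x*y = y*x$ reads $x^{1/2}yx^{1/2} = y^{1/2}xy^{1/2}$; writing $u = x^{1/2}$, $v = y^{1/2}$ and using that $Q$ is uniquely $2$-divisible (so $u$, $v$ range over all of $Q$), this becomes $uv^2u = vu^2v$. Specializing the middle Moufang identity $(xy)(zx) = x(yz)x$ to $x=u$, $y=z=v$ and to $x=v$, $y=z=u$ gives $uv^2u = (uv)(vu)$ and $vu^2v = (vu)(uv)$, so $Q(1/2)$ is commutative if and only if $(uv)(vu) = (vu)(uv)$ for all $u$, $v \in Q$. Inside the associative subloop $\langle u, v \rangle$ this is the single commutator condition $[uv, vu] = 1$, and a routine expansion shows it is equivalent to $uv$ centralizing $[u, v]$, i.e.\ $[uv, [u, v]] = 1$.

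The direction ($\Leftarrow$) is then immediate: if $\cl{\langle u, v\rangle} \le 2$, then $[u, v]$ lies in $Z(\langle u, v\rangle)$ and is in particular centralized by $uv$. For the converse, fix a $2$-generated subloop $H = \langle a, b\rangle$, a group by diassociativity; the reformulation supplies $[xy, [x, y]] = 1$ for every $x$, $y \in H$. The key step is the substitution $(x, y) \mapsto (x, xy)$: using $[x, xy] = [x, y]$, the hypothesis applied to $(x, xy)$ yields $[x \cdot xy, [x, xy]] = [x^2y, [x, y]] = 1$. Hence both $xy$ and $x^2y$ centralize $[x, y]$, and so does their quotient $x = (x^2y)(xy)^{-1}$, giving $[x, [x, y]] = 1$ for all $x$, $y \in H$. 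Interchanging $x$ and $y$ yields $[y, [x, y]] = 1$. Applied to the generators $a$, $b$ of $H$, this shows $[a, b] \in Z(H)$, so $H / Z(H)$ is generated by commuting cosets, hence abelian, whence $\cl{H} \le 2$.

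The only delicate point is the substitution $(x, y) \mapsto (x, xy)$, tailored so that $[x, xy] = [x, y]$; this converts ``$xy$ centralizes $[x, y]$'' into ``$x^2y$ centralizes $[x, y]$'', after which $x$ is recovered as a group-theoretic quotient inside the centralizer. Everything else is routine commutator calculus in the two-generated (hence associative) subloop.
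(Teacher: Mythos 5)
Your proof is correct and follows essentially the same route as the paper's: both reduce commutativity of $Q(1/2)$ via unique $2$-divisibility to the identity $xy^2x=yx^2y$ in the (associative, by diassociativity) two-generated subloops, and then to the statement that $[x,y]$ is central in $\langle x,y\rangle$. Your substitution $(x,y)\mapsto(x,xy)$ simply makes explicit the passage from ``$[x,y]$ commutes with $xy$'' to ``$[x,y]$ commutes with $x$'', which the paper asserts without detail.
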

\begin{proof}
Note that $\rank{H}\le 2$ implies that $H$ is a group. The following four
identities are equivalent: $x*y = x^{1/2}yx^{1/2}=y^{1/2}xy^{1/2} = y*x$,
$xy^2x=yx^2y$ (by unique $2$-divisibility), $[x,y]yx = yx[x,y]$, and
$[x,y]x=x[x,y]$. The first identity says that $Q(1/2)$ is commutative, while
the last identity says that every subloop $H$ with $\rank{H}\le 2$ satisfies
$\cl{H}\le 2$.
\end{proof}

The following result is \cite[Lemma VII.5.6]{Bruck}:

\begin{lemma}\label{lm:abelianBruck}
Let $Q$ be a uniquely $2$-divisible Moufang loop. Then $Q(1/2)$ is an abelian
group if and only if $[x,y]\in N(Q)$, $[x,y,z]\in Z(Q)$, and
$[[x,y],z]=[x,y,z]^2$ for every $x$, $y$, $z\in Q$.
\end{lemma}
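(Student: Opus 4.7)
The plan is to establish each direction by expanding $x*y = x^{1/2}yx^{1/2}$ and reducing inside two- or three-generated subloops, where diassociativity and the Moufang identities apply freely. Recall that $(Q,*)$ is an abelian group iff it is both commutative and associative, so the two directions split into commutativity and associativity halves.

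For commutativity, Lemma~\ref{lm:commutativeBruck} says $(Q,*)$ is commutative iff $\cl{\langle x,y\rangle}\le 2$ for every $x,y\in Q$, equivalently iff $[[x,y],x]=[[x,y],y]=1$. Under the three hypotheses, specializing $[[x,y],z]=[x,y,z]^2$ to $z=x$ yields $[[x,y],x]=[x,y,x]^2=1$, since $[x,y,x]=1$ in the associative subloop $\langle x,y\rangle$; likewise for $z=y$. Thus commutativity of $(Q,*)$ is immediate. Conversely, if $(Q,*)$ is an abelian group, Lemma~\ref{lm:commutativeBruck} recovers $[[x,y],x]=1$ at no cost.

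For associativity, I would expand both sides of $(x*y)*z = x*(y*z)$. The right-hand side is $x^{1/2}(y^{1/2}zy^{1/2})x^{1/2}$ on the nose. The left-hand side first requires the identification of $(x*y)^{1/2}$: observe that $(x*y)^2 = x^{1/2}yxyx^{1/2}\in\langle x,y\rangle$ and, from the commutativity step, $\langle x,y\rangle$ is a uniquely $2$-divisible group of class at most $2$, inside which the square root is computable as a product of powers of $x,y$ and a central commutator term. Substituting this expression, conjugating by $z$, and applying the Moufang identities reduces both sides to a common normal form. The three hypotheses are exactly the bookkeeping needed: $[x,y]\in N(Q)$ lets commutators associate freely; $[x,y,z]\in Z(Q)$ lets associators move and commute freely; and $[[x,y],z]=[x,y,z]^2$ cancels the final discrepancy produced when the central commutator inside $(x*y)^{1/2}$ is transported past $z$. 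For the forward direction, I would run the same reduction in reverse: associativity combined with the already-established commutativity force each of the three identities to hold, since any one of them failing leaves a visible residual term in the expansion.

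The main obstacle is the explicit handling of $(x*y)^{1/2}$. It is not simply $x^{1/2}y^{1/2}$, so one must realize it inside the class-$\le 2$ group $\langle x,y\rangle$ via a binomial identity of the form $(ab)^{1/2} = a^{1/2}b^{1/2}[b,a]^{-1/8}$. This extra central factor is the technical reason why a \emph{square} — namely $[x,y,z]^2$ — appears on the right in the third hypothesis when one transports the root past a third element $z$ and the outer conjugator $x^{1/2}$; pinning down this factor precisely, and showing it matches $[[x,y],z]$ on the nose, is the delicate core of the argument, and it is exactly here that Moufang-specific calculations (rather than general loop calculations) are indispensable.
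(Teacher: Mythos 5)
The paper does not actually prove this lemma: it is quoted directly from \cite[Lemma VII.5.6]{Bruck}, so your attempt has to be judged on its own merits. The commutativity half of your argument is fine: specializing $[[x,y],z]=[x,y,z]^2$ at $z=x$ gives $[[x,y],x]=[x,y,x]^2=1$ by diassociativity, and Lemma~\ref{lm:commutativeBruck} converts this into commutativity of $Q(1/2)$. The rest, however, is a plan rather than a proof, and the plan has concrete holes.

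First, the associativity computation --- which is the entire content of the lemma --- is never carried out. The sentence ``substituting this expression, conjugating by $z$, and applying the Moufang identities reduces both sides to a common normal form'' is precisely the claim to be proved, and you yourself flag the identification of the residual factor with $[[x,y],z]$ as the ``delicate core'' without performing it. Second, your treatment of $(x*y)^{1/2}$ is not sound as stated. The formula $(ab)^{1/2}=a^{1/2}b^{1/2}[b,a]^{-1/8}$ presupposes that you can compute inside a uniquely $2$-divisible group of class at most $2$ containing $a^{1/2}$, $b^{1/2}$ and $[b,a]^{1/8}$. Subloops of uniquely $2$-divisible loops need not be uniquely $2$-divisible (compare $\mathbb Z\le(\mathbb Q,+)$), so $\langle x,y\rangle$ need not contain these roots; and the subloop $\langle a^{1/2},b^{1/2},[b,a]^{1/8}\rangle$ is $3$-generated, hence not a group by diassociativity alone --- to make it one you would need Moufang's theorem together with a vanishing associator, which is part of what is being proved. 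Third, the forward direction cannot be obtained by ``running the reduction in reverse'': even granting a normal form in which associativity of $*$ is equivalent to the vanishing of a single product of commutator and associator expressions, the vanishing of that product for all $x$, $y$, $z$ does not automatically split into the three separate conclusions $[x,y]\in N(Q)$, $[x,y,z]\in Z(Q)$ and $[[x,y],z]=[x,y,z]^2$; isolating each requires further specializations and separate arguments (note that $[x,y]\in N(Q)$ alone packages three associator identities). In short, the skeleton is plausible and broadly in the spirit of Bruck's original argument, but the proof is missing exactly where the work lies.
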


\begin{corollary}\label{cr:groupBruck}
Let $Q$ be a uniquely $2$-divisible group with $\cl{Q}\le 2$. Then $Q(1/2)$ is
an abelian group.
\end{corollary}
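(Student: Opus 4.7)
The plan is to verify the three hypotheses of Lemma \ref{lm:abelianBruck} and conclude. Since $Q$ is assumed to be an (associative) group, every associator $[x,y,z]$ equals $1$, so in particular $[x,y,z]\in Z(Q)$ and $[x,y,z]^2=1$ for all $x,y,z\in Q$.

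Next, I would use the assumption $\cl{Q}\le 2$, which means $Q'\le Z(Q)$. In particular every commutator $[x,y]$ lies in $Z(Q)$, and since the center of a group is contained in the nucleus (trivially, as all associators vanish), $[x,y]\in N(Q)$. Moreover $[x,y]\in Z(Q)$ forces $[[x,y],z]=1$ for every $z\in Q$, which coincides with $[x,y,z]^2=1$.

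Thus all three conditions of Lemma \ref{lm:abelianBruck} are met, and hence $Q(1/2)$ is an abelian group. There is no substantive obstacle here; the corollary is a direct specialization of Lemma \ref{lm:abelianBruck} to the associative nilpotent-of-class-two case, where associators vanish and commutators are central.
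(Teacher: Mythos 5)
Your proof is correct and is exactly the intended argument: the paper states this as an immediate corollary of Lemma \ref{lm:abelianBruck}, and checking its three hypotheses (associators vanish since $Q$ is a group, commutators are central hence nuclear since $\cl{Q}\le 2$) is the whole content. The only cosmetic remark is that in a group $N(Q)=Q$, so $[x,y]\in N(Q)$ holds without any appeal to centrality.
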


In a uniquely $2$-divisible Moufang loop we have $(x^{1/2})^{-1} =
(x^{-1})^{1/2}$, and we denote this element by $x^{-1/2}$.

Recall that in a group of nilpotency class $2$ we have $[xy,z] =[x,z][y,z]$.

\begin{lemma}\label{lm:auxiliaryBruck}
Let $Q$ be a uniquely $2$-divisible Moufang loop with $\cl{Q}\le 2$. Let
$Q(1/2) = (Q,*)$ be the associated Bruck loop. Assume that $\cl{Q}\le 2$. Then:
\begin{enumerate}
\item[(i)] $y^{-1}xy = x*[x,y]$ for every $x$, $y\in Q$,
\item[(ii)] $[x^{1/2},y] = [x,y]^{1/2}$ for every $x$, $y\in Q$,
\end{enumerate}
where the commutators are calculated in $Q$.
\end{lemma}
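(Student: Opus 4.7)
The plan is to derive (i) directly from the defining relation $xy=yx\cdot[x,y]$ using centrality of the commutator, and then to bootstrap (ii) from (i) by applying it with $x^{1/2}$ in place of $x$ and comparing square roots.

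For (i), I would first observe that $\cl{Q}\le 2$ forces $[x,y]\in Z(Q)\subseteq N(Q)$, so $[x,y]$ is nuclear and central. Starting from $y^{-1}xy = y^{-1}(yx\cdot[x,y])$ and pulling the nuclear factor out of the product yields $(y^{-1}(yx))\cdot[x,y]$, which collapses to $x\cdot[x,y]$ by diassociativity in the group $\langle x,y\rangle$. Meanwhile $x*[x,y]=x^{1/2}[x,y]x^{1/2}$; centrality of $[x,y]$ lets me shuffle it past the $x^{1/2}$'s, and the identity $x^{1/2}x^{1/2}=x$ then gives $x*[x,y]=x[x,y]$ as well, matching $y^{-1}xy$.

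For (ii), I plan to apply (i) with $x^{1/2}$ in place of $x$ to obtain $y^{-1}x^{1/2}y = x^{1/2}[x^{1/2},y]$. Working inside the associative subloop $\langle x^{1/2},y\rangle$, squaring gives $(y^{-1}x^{1/2}y)^2 = y^{-1}xy$, so unique $2$-divisibility of $Q$ identifies $y^{-1}x^{1/2}y$ as $(y^{-1}xy)^{1/2}$. Applying (i) once more yields $y^{-1}xy=x[x,y]$, and since $\langle x,[x,y]\rangle$ is an abelian group the square root splits: $(x[x,y])^{1/2}=x^{1/2}[x,y]^{1/2}$. Equating the two expressions for $y^{-1}x^{1/2}y$ and cancelling the common left factor $x^{1/2}$ delivers $[x^{1/2},y]=[x,y]^{1/2}$.

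The main obstacle is really just bookkeeping of parentheses: every rearrangement above is legal either by diassociativity of Moufang loops (so any two-generated subloop is a group) or by nuclearity of the central factor being moved, so no nontrivial associator ever appears. The one step worth checking with care is the identity $(y^{-1}xy)^{1/2}=y^{-1}x^{1/2}y$, which uses both power-associativity of $\langle y,x^{1/2}\rangle$ and the uniqueness of square roots in $Q$; once that is in hand, both parts of the lemma are essentially immediate.
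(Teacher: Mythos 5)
Your part (i) is correct and is essentially the paper's argument in a slightly different dress: the paper reduces $x*[x,y]=x^{1/2}[x,y]x^{1/2}$ to the identity $[y^{-1}xy,x^{1/2}]=1$ and kills that commutator by linearity, while you use centrality of $[x,y]$ to collapse both sides to $x[x,y]$ directly; both hinge on $Q'\le Z(Q)$. For part (ii) you take a genuinely different and longer route (conjugating $x^{1/2}$ by $y$, squaring, and invoking uniqueness of square roots), whereas the paper simply observes that the claim is equivalent, under the substitution $x\mapsto x^2$, to $[x^2,y]=[x,y]^2$, which is immediate from linearity of the commutator map (Proposition \ref{pr:class2}(iii)). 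Your route works, but it buys nothing over the one-line reduction.

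There is one step in (ii) that is not justified as written: the splitting $(x[x,y])^{1/2}=x^{1/2}[x,y]^{1/2}$. You attribute it to $\langle x,[x,y]\rangle$ being an abelian group, but the square roots $x^{1/2}$ and $[x,y]^{1/2}$ need not lie in that subloop, and what the identity actually requires is that $x^{1/2}$ and $[x,y]^{1/2}$ commute (they generate a group by diassociativity, so associativity is free). That $x$ commutes with $[x,y]$ does not formally transfer to their square roots. The gap is easy to close: by linearity of the commutator, $[x^{1/2},[x,y]^{1/2}]^4=[x,[x,y]]=1$, and unique $2$-divisibility then forces $[x^{1/2},[x,y]^{1/2}]=1$ (equivalently, one checks that $Z(Q)$ is closed under square roots). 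Note, however, that this repair is precisely the linearity-plus-root-extraction argument that constitutes the paper's entire proof of (ii), so once you supply it your detour becomes redundant.
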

\begin{proof}
Part (i) is equivalent to
\begin{displaymath}
    y^{-1}xy = x^{1/2}x^{-1}y^{-1}xyx^{1/2} = x^{-1/2}y^{-1}xyx^{1/2},
\end{displaymath}
or $[y^{-1}xy,x^{1/2}]=1$. But $[y^{-1}xy,x^{1/2}] =
[y^{-1},x^{1/2}][x,x^{1/2}][y,x^{1/2}] = [y^{-1}y,x^{1/2}] = 1$.

Part (ii) is equivalent to $[x,y] = [x^2,y]^{1/2}$, or $[x,y]^2 = [x^2,y]$,
which certainly holds.
\end{proof}

\section{Uniquely $2$-divisible Moufang loops}

Let $Q$ be a loop. We say that a mapping $f:Q^n\to Q$ is \emph{(multi)linear}
if the identities
\begin{align*}
    (x_1x_1',x_2,\dots,x_n)f &= (x_1,x_2,\dots,x_n)f\cdot (x_1',x_2,\dots,x_n)f,\\
    (x_1,x_2x_2',\dots,x_n)f &= (x_1,x_2,\dots,x_n)f\cdot (x_1,x_2',\dots,x_n)f,\\
                        &\vdots\\
    (x_1,x_2,\dots,x_nx_n')f &= (x_1,x_2,\dots,x_n)f\cdot (x_1,x_2,\dots,x_n')f
\end{align*}
are satisfied for every $x_1$, $x_1'$, $\dots$, $x_n$, $x_n'\in Q$.

For a loop $Q$ with two-sided inverses, a mapping $f:Q^n\to Q$ is
\emph{alternating} if for every $x_1$, $\dots$, $x_n\in Q$ and every
permutation $\pi$ of $\{1,\dots,n\}$ we have
\begin{displaymath}
    (x_{1\pi},\dots,x_{n\pi})f = ((x_1,\dots,x_n)f)^{\pi\,\mathrm{sgn}},
\end{displaymath}
where $\pi\,\mathrm{sgn}=1$ if $\pi$ is an even permutation, and
$\pi\,\mathrm{sgn}=-1$ if $\pi$ is an odd permutation.

For a uniquely $2$-divisible loop $Q$, it is easy to see that a linear mapping
$f:Q^n\to Q$ is alternating if and only if $(x_1,\dots,x_n)f$ vanishes whenever
$x_i=x_j$ for some $i\ne j$.

\begin{proposition}\label{pr:class2}
Let $Q$ be a uniquely $2$-divisible Moufang loop with $\cl{Q}\le
2$. Then:
\begin{enumerate}
\item[(i)] $x^3\in N(Q)$ for every $x\in Q$,
\item[(ii)] $[x,y,z]^3=1$ for every $x$, $y$, $z\in Q$,
\item[(iii)] the commutator mapping $Q^2\to Q$, $(x,y)\mapsto [x,y]$ is
alternating and linear,
\item[(iv)] the associator mapping $Q^3\to Q$, $(x,y,z)\mapsto [x,y,z]$ is
alternating and linear.
\end{enumerate}
\end{proposition}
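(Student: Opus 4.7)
The plan is to establish the four statements in the order (iv), (ii), (iii), (i), since each later part depends on the earlier ones; the trickiest step will be the derivation of cyclic symmetry of the associator in (iv), and once that is in place everything else reduces to bookkeeping of central commutators and associators.

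For (iv), I would first note that $[x,x,y]=[x,y,x]=[x,y,y]=1$ follow from left alternativity, flexibility, and right alternativity. For linearity in the first argument, the inner mapping $L(y,z)$ has companion $[z,y]\in Z(Q)\subseteq N(Q)$ (using $\cl{Q}\le 2$), hence is an automorphism of $Q$; combined with $xL(y,z)=x[x,z,y]^{-1}$, it is moreover a center automorphism, and Lemma~\ref{lm:centaut} yields that $x\mapsto [x,z,y]^{-1}$ is a homomorphism $Q\to Z(Q)$, giving $[xx',z,y]=[x,z,y][x',z,y]$. For cyclic symmetry, I would expand the Moufang identity $(xy)(zx)=x(yz)x$: rewriting the right-hand side first as $((xy)z)\cdot x\cdot[x,y,z]^{-1}$ and then as $(xy)(zx)\cdot[xy,z,x]\cdot[x,y,z]^{-1}$ via two applications of the associator definition forces $[xy,z,x]=[x,y,z]$; combined with linearity in the first argument and $[x,z,x]=1$, this gives $[y,z,x]=[x,y,z]$. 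Cyclic symmetry together with linearity in the first argument implies multilinearity, and then the remark at the start of Section~3 promotes vanishing on coinciding arguments to full alternation.

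For (ii), I compute $[ab,y]$ in two ways and equate. A direct expansion of $(ab)y$ through to $y(ab)$ times a remainder, using only the defining identities $(pq)r=(p(qr))[p,q,r]$ and $uv=vu[u,v]$ together with the centrality of all commutators and associators, yields
\[
[ab,y]=[a,y][b,y]\cdot[y,a,b][a,y,b]^{-1}[a,b,y],
\]
which by the alternation from (iv) simplifies to $[ab,y]=[a,y][b,y][a,b,y]^{3}$. Separately, the pseudo-automorphism identity $(aT(x))(bT(x)\cdot x^{3})=(ab)T(x)\cdot x^{3}$ (using $aT(x)=a[a,x]$, valid in class-$2$ Moufang) expands via the same centrality considerations and the linearity of the associator in its third argument to $[ab,x]=[a,x][b,x]\cdot[a,b,x]^{-3}$. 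Equating the two expressions (with $x=y$) forces $[a,b,y]^{6}=1$, and unique $2$-divisibility promotes this to $[a,b,y]^{3}=1$. Parts (iii) and (i) then follow immediately: commutator antisymmetry is $[x,y]^{-1}=[y,x]$ recalled in the paper, while commutator linearity is what the first expression for $[ab,y]$ becomes once $[a,b,y]^{3}=1$; and $[x^{3},y,z]=[x,y,z]^{3}=1$ by the associator linearity of (iv) and the cube-triviality of (ii), so $x^{3}$ associates with every pair and hence lies in $N(Q)$ by the Moufang-theorem characterization of the nucleus.
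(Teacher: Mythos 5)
Your proof is correct, but it takes a genuinely different route from the paper's. The paper outsources essentially all of the content to Hsu: it quotes \cite[Theorem 3.3]{Hsu} for the alternating/linear associator and the identity $[xy,z]=[x,z][y,z][x,y,z]^3=[x,z][y,z][x^3,y,z]$, and \cite[Theorem A]{Hsu} for $x^6\in N(Q)$; the only new step is that unique $2$-divisibility of $N(Q)$ turns $(x^3)^2=x^6\in N(Q)$ into $x^3\in N(Q)$, which kills $[x^3,y,z]$ and hence linearizes the commutator. You instead reconstruct everything from scratch: the alternating/linear associator via the center-automorphism trick with $L(y,z)$ (the same device the paper uses later in Proposition~\ref{pr:class3}) together with a middle Moufang identity $(xy)(zx)=(x(yz))x$ to force $[xy,z,x]=[x,y,z]$ and hence cyclic symmetry; the expansion $[ab,y]=[a,y][b,y][a,b,y]^3$ by direct associator/commutator bookkeeping; and, most notably, you replace Hsu's $x^6\in N(Q)$ entirely by computing $[ab,x]$ a second time through the pseudo-automorphism identity for $T(x)$ with companion $x^3$, which yields $[ab,x]=[a,x][b,x][a,b,x]^{-3}$ and hence $[x,y,z]^6=1$, so $[x,y,z]^3=1$ by unique $2$-divisibility. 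Your logical order is thus reversed relative to the paper's: you get (i) from (ii) rather than (ii) from (i). All the individual steps check out (the centrality of commutators and associators under $\cl{Q}\le 2$ justifies every rearrangement, and $aT(x)=a[a,x]$ holds by diassociativity), so what you lose in brevity you gain in self-containedness — the argument no longer depends on Hsu's theorems at all, and $2$-divisibility enters only once, at the final step $[x,y,z]^6=1\Rightarrow[x,y,z]^3=1$.
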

\begin{proof}
By \cite[Theorem 3.3]{Hsu}, in every Moufang loop $H$ with $\cl{H}\le 2$ the
associator mapping is alternating and linear, and the following identity holds:
\begin{displaymath}
    [xy,z]=[x,z][y,z]\cdot [x,y,z]^3 = [x,z][y,z]\cdot [x^3,y,z].
\end{displaymath}
Furthermore, by \cite[Theorem A]{Hsu}, we have $x^6\in N(H)$. Since $Q$ is
uniquely $2$-divisible, so is $N(Q)$. As $x^3\cdot x^3 = x^6\in N(Q)$, it
follows that $x^3\in N(Q)$ and thus $[x^3,y,z]=1$. Hence the commutator mapping
in $Q$ is linear, and it is obviously alternating.
\end{proof}

\begin{lemma}\label{lm:class2}
Let $Q$ be a uniquely $2$-divisible Moufang loop with $\cl{Q}\le 2$. Then the
associators in $Q$ and $Q(1/2)$ agree.
\end{lemma}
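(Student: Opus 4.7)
The strategy is to reduce the $*$-multiplication to the $\cdot$-multiplication via a central correction factor and then expand both iterated products in sight. First I would establish the closed form
\[
x*y \,=\, yx \cdot [x,y]^{1/2} \,=\, xy \cdot [x,y]^{-1/2}.
\]
This follows from Lemma \ref{lm:auxiliaryBruck}(ii), which gives $[x^{1/2},y] = [x,y]^{1/2}$: since $\cl{Q}\le 2$ forces $[x,y]\in Z(Q)$, its square root $[x,y]^{1/2}$ is also central (in a uniquely $2$-divisible Moufang loop, the unique square root of a central element is central, as one sees by applying the pseudo-automorphism $T(a)$ and invoking uniqueness of $2$-divisibility). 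Then
$x^{1/2}yx^{1/2} = \bigl(yx^{1/2}[x,y]^{1/2}\bigr)x^{1/2} = yx\cdot[x,y]^{1/2}$, using diassociativity to merge $x^{1/2}\cdot x^{1/2} = x$.

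Next I would expand both $(x*y)*z$ and $x*(y*z)$ using this formula. Central factors move past products freely, and the linearity identities $[xy,z]=[x,z][y,z]$ and $[x,yz]=[x,y][x,z]$ supplied by Proposition \ref{pr:class2}(iii) annihilate every commutator involving $[x,y]^{-1/2}$, $[x,z]^{-1/2}$, or $[y,z]^{-1/2}$. A routine calculation then yields
\begin{align*}
(x*y)*z &\,=\, (xy)z \cdot [x,y]^{-1/2}[x,z]^{-1/2}[y,z]^{-1/2},\\
x*(y*z) &\,=\, x(yz) \cdot [x,y]^{-1/2}[x,z]^{-1/2}[y,z]^{-1/2}.
\end{align*}
Replacing $(xy)z$ by $x(yz)\cdot[x,y,z]$ in the first line gives $(x*y)*z = x*(y*z)\cdot[x,y,z]$.

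Finally, since $[x,y,z]\in Z(Q)$, the closed form specializes to $a*[x,y,z] = a\cdot[x,y,z]$ for every $a\in Q$. Comparing $(x*y)*z = x*(y*z)\cdot[x,y,z] = \bigl(x*(y*z)\bigr)*[x,y,z]$ with the definition of the associator in $Q(1/2)$ yields $[x,y,z]_{*} = [x,y,z]$, as claimed. The principal obstacle is parenthesization: a Moufang loop is not associative, so each rewriting that slides a central factor past a product must be justified by centrality rather than by plain associativity. The hypothesis $\cl{Q}\le 2$ is precisely what makes every commutator, every associator, and each of their square roots central, and this centrality is what collapses the whole bookkeeping into manipulations inside $Z(Q)$.
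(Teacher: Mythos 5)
Your proof is correct, but it is organized differently from the paper's. You first derive the closed form $x*y = xy\cdot[x,y]^{-1/2}$, exhibiting the $*$-product as a central deformation of the $Q$-product, and then expand both bracketings of $x*y*z$ mechanically; the correction factors cancel and the $Q$-associator pops out as the $*$-associator. The paper instead works directly with the nested triple products: it reduces the claim to the identity $(x(yz))^{-1}\cdot(xy)z = u^{1/2}vu^{1/2}$ for explicit $u,v$, must first invoke commutativity of $Q(1/2)$ (Lemma \ref{lm:commutativeBruck}) to reshuffle the right-hand side, then prove $[v,u]=1$ so that $u^{1/2}vu^{1/2}$ collapses to $uv$, and only then does the commutator bookkeeping. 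Your route buys a cleaner computation -- the single identity $x*y=xy\cdot[x,y]^{-1/2}$ absorbs all the rearrangement at once and makes the cancellation of the six half-commutators transparent -- at the price of one preliminary fact you should state more directly: the centrality of $[x,y]^{1/2}$ is immediate because it equals the commutator $[x^{1/2},y]$ by Lemma \ref{lm:auxiliaryBruck}(ii) and $\cl{Q}\le 2$ puts all commutators in $Z(Q)$; your detour through pseudo-automorphisms works (inner mappings of Moufang loops are semi-automorphisms, so they preserve squares and hence unique square roots of fixed elements) but is needed only to see that $Z(Q)$ itself is uniquely $2$-divisible, which you do use when splitting $([x,z][y,z])^{-1/2}$ into $[x,z]^{-1/2}[y,z]^{-1/2}$. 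Both arguments rest on the same ingredients: Lemma \ref{lm:auxiliaryBruck}(ii), the linearity of the commutator from Proposition \ref{pr:class2}(iii), and the fact that commutators and associators are well defined modulo the center.
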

\begin{proof}
We need to show that $(x(yz))^{-1}\cdot (xy)z = (x*(y*z))^{-1}*((x*y)*z)$.
Since $Q(1/2)$ is commutative by Lemma \ref{lm:commutativeBruck}, we can
rewrite the right hand side as
\begin{displaymath}
    (x^{-1}*(y^{-1}*z^{-1}))*(z*(y*x)).
\end{displaymath}
Our task is therefore to show that
\begin{equation}\label{eq:nasty}
    (x(yz))^{-1}\cdot (xy)z = u^{1/2}vu^{1/2},
\end{equation}
where
\begin{displaymath}
    u = x^{-1/2}(y^{-1/2}z^{-1}y^{-1/2})x^{-1/2},\quad
    v = z^{1/2}(y^{1/2}xy^{1/2})z^{1/2}.
\end{displaymath}
The commutator mapping is linear by Proposition \ref{pr:class2}, and all
commutators and associators are central by $\cl{Q}\le 2$. We can therefore
rewrite the commutator $[v,u]$ as
\begin{displaymath}
    [z(yx),x^{-1}(y^{-1}z^{-1})] =
    [z(yx),(x^{-1}y^{-1})z^{-1}] = [z(yx),(z(yx))^{-1}] = 1.
\end{displaymath}
Then $[v,u^{1/2}] = [v,u]^{1/2}=1$ by Lemma \ref{lm:auxiliaryBruck}(ii) and
unique $2$-divisibility, which means that we can interchange the factors
$u^{1/2}$, $v$ in the right hand side of \eqref{eq:nasty}. It remains to check
that
\begin{displaymath}
    (x(yz))^{-1}\cdot (xy)z = uv = x^{-1/2}(y^{-1/2}z^{-1}y^{-1/2})x^{-1/2}\cdot
    z^{1/2}(y^{1/2}xy^{1/2})z^{1/2}.
\end{displaymath}
Now,
\begin{align*}
    x^{-1/2}(y^{-1/2}z^{-1}y^{-1/2})x^{-1/2} &=
        x^{-1/2}(z^{-1}y^{-1})x^{-1/2}\cdot [y^{-1/2},z^{-1}]\\
        &=(z^{-1}y^{-1})x^{-1}\cdot [y^{-1/2},z^{-1}][x^{-1/2},z^{-1}y^{-1}]\\
        &=(x(yz))^{-1}\cdot [y^{-1/2},z^{-1}][x^{-1/2},z^{-1}y^{-1}]
\end{align*}
and, similarly,
\begin{displaymath}
    z^{1/2}(y^{1/2}xy^{1/2})z^{1/2} = (xy)z\cdot [y^{1/2},x][z^{1/2},xy].
\end{displaymath}
By Lemma \ref{lm:auxiliaryBruck}(ii) and the linear and alternating properties
of the commutator,
\begin{multline*}
    [y^{-1/2},z^{-1}][x^{-1/2},z^{-1}y^{-1}][y^{1/2},x][z^{1/2},xy]\\
    = [y,z]^{1/2}[x,z]^{1/2}[x,y]^{1/2}[y,x]^{1/2}[z,x]^{1/2}[z,y]^{1/2}=1.
\end{multline*}
\end{proof}

\begin{proposition}\label{pr:class3}
Let $Q$ be a uniquely $2$-divisible Moufang loop with $\cl{Q}\le 3$. Assume
further that every proper subloop $H$ of $Q$ has $\cl{H}\le 2$. Then the
mappings
\begin{align*}
    &Q^3\to Q,\quad (x,y,z)\mapsto [[x,y],z],\\
    &Q^4\to Q,\quad (x,y,z,u)\mapsto [[x,y,z],u]\\
    &Q^4\to Q,\quad (x,y,z,u)\mapsto [x,y,[z,u]],\\
    &Q^5\to Q,\quad (x,y,z,u,v)\mapsto [x,y,[z,u,v]]
\end{align*}
are linear. Moreover, if $\rank{Q}\ge 3$ then $(x,y,z)\mapsto [[x,y],z]$ is
alternating.
\end{proposition}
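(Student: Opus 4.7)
The plan is to work modulo $Z(Q)$. Since $\cl{Q}\le 3$, every expression of the form $[[\cdot,\cdot],\cdot]$, $[[\cdot,\cdot,\cdot],\cdot]$, $[\cdot,\cdot,[\cdot,\cdot]]$, or $[\cdot,\cdot,[\cdot,\cdot,\cdot]]$ automatically lies in $Z(Q)$, and because commutators and associators are invariant under multiplying arguments by central elements, each of the four maps factors through $(Q/Z(Q))^n$. The quotient $\bar Q = Q/Z(Q)$ is a uniquely $2$-divisible Moufang loop of class $\le 2$, so Proposition~\ref{pr:class2} applies in $\bar Q$ and gives linearity of the commutator and associator there. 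This immediately yields linearity of all four maps modulo $Z(Q)$, so the entire task reduces to showing the central error is trivial.

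Each linearity check then reduces to one of four identities whose special argument lies in $Q'\cap Z_2(Q)$: (i) $[a,yz] = [a,y][a,z]$; (ii) $[x,y,ab] = [x,y,a][x,y,b]$; (iii) $[xx',y,w] = [x,y,w][x',y,w]$; (iv) $[x,yy',w] = [x,y,w][x,y',w]$. For (i) and (ii) I would consider the subloop $H$ generated by the arguments involved: when $H$ is proper the hypothesis forces $\cl H\le 2$, and Proposition~\ref{pr:class2} applies inside $H$ to yield exact linearity (subloop commutators and associators agree with the ambient ones); when $H=Q$, the non-generator property of $Q'$ (available because the special argument is itself a commutator or associator) forces $Q$ to be generated by the remaining two arguments, so $Q$ is a rank-$\le 2$ Moufang loop and hence a group by diassociativity. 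Inside such a group of class $\le 3$, (i) follows from the standard commutator expansion using $[a,y]\in Z(Q)$, and (ii) is trivial since groups have no nontrivial associators.

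For (iii) I would bypass the subloop reduction altogether and use the pseudo-automorphism $L(w,y)$ directly: it has companion $[y,w]$ and satisfies $qL(w,y) = q[q,y,w]^{-1}$; since $w\in Z_2(Q)$, all three associators $[xx',y,w]$, $[x,y,w]$, $[x',y,w]$ and the companion $[y,w]$ are central, so substituting $x,x'$ into the pseudo-automorphism identity $(uv)L(w,y)\cdot[y,w]=(uL(w,y))(vL(w,y)\cdot[y,w])$ and collecting the central factors on both sides forces $[xx',y,w]^{-1}=[x,y,w]^{-1}[x',y,w]^{-1}$. For (iv), I would first establish the exact alternating identity $[x,y,w]=[y,x,w]^{-1}$ for $w\in Q'\cap Z_2(Q)$ via the same subloop trick as in (i)--(ii), applied to $\langle x,y,w\rangle$ (when this equals $Q$, the reduction lands in a rank-$\le 2$ group where all associators vanish), and then combine this exact alternating with (iii) to obtain (iv).

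Finally, the alternating property of $(x,y,z)\mapsto[[x,y],z]$ under $\rank{Q}\ge 3$ follows because $\langle x,y\rangle$ has rank $\le 2 < 3$, hence is a proper subloop and therefore of class $\le 2$, so $[x,y]$ is central in $\langle x,y\rangle$ and $[[x,y],x]=[[x,y],y]=1$; combined with $[[x,x],z]=1$, the linearity just proven, and $[y,x]=[x,y]^{-1}$, the full alternating property is immediate. The main obstacle I foresee is bookkeeping the unique $2$-divisibility of the various subloops, so that Proposition~\ref{pr:class2} can be legitimately applied inside them; this is automatic when $Q$ is finite of odd order, but in the infinite case it would require an explicit closure under $1/2$-powers before the subloop arguments go through.
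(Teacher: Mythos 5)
Your proposal is correct and follows essentially the same route as the paper: reduce modulo $Z(Q)$ via Proposition~\ref{pr:class2} applied to $Q/Z(Q)$, settle the residual identities inside proper subloops of class at most $2$ (with the non-generator property of $Q'$ handling the case where the generated subloop is all of $Q$, which the paper instead dispatches up front by treating the group case first), use the pseudo-automorphism $L(w,y)$ with central companion $[y,w]$ for linearity of $[x,y,[z,u]]$ and $[x,y,[z,u,v]]$ in $x$ (the paper packages this same computation as Lemma~\ref{lm:centaut} on center automorphisms), and transfer linearity to $y$ via the alternating property on proper subloops. Your closing caveat about $2$-divisibility of quotients and subloops is a legitimate point that the paper glosses over, and it is harmless in the intended application to finite odd-order loops.
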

\begin{proof}
If $\rank{Q}\ge 3$ then $[[x,y],y]=1$, so $(x,y,z)\mapsto [[x,y],z]$ will be
alternating the moment it is linear.

When $Q$ is a group, the mapping $(x,y,z)\mapsto [[x,y],z]$ is linear, and
there is nothing else to prove. Thanks to diassociativity, we can assume that
$\rank{Q}\ge 3$.

Since $\cl{Q/Z(Q)}\le 2$, Proposition \ref{pr:class2} implies
$[xx',y]=[x,y][x',y]c_1$, $[x,yy']=[x,y][x,y']c_2$, $[xx',y,z] =
[x,y,z][x',y,z]c_3$, and so on, where $c_1$, $c_2$, $c_3\in Z(Q)$. Recall that
the commutator and the associator are well-defined modulo $Z(Q)$.

Thus $[[xx',y],z] = [[x,y][x',y]c_1,z] = [[x,y][x',y],z]$. Now, $H=\langle
[x,y],[x',y],z\rangle$ is a proper subloop of $Q$, and so $[[x,y][x',y],z] =
[[x,y],z][[x',y],z]$ by Proposition \ref{pr:class2} and $\cl{H}\le 2$. Using
analogous arguments, we establish the linearity of $[[x,y],z]$ and
$[[x,y,z],u]$ in all variables, and also the linearity of $[x,y,[z,u]]$ and
$[x,y,[z,u,v]]$ in the variables $z$, $u$, $v$.

Let us prove the linearity of $[x,y,[z,u]]$ in $x$. We have $xL(z,y) =
x[x,y,z]^{-1}$ in any Moufang loop. In particular, $xL([z,u],y) =
x[x,y,[z,u]]^{-1}$. Moreover, the companion of the pseudo-automorphism
$L([z,u],y)$ is the central element $[y,[z,u]]$, which means that $L([z,u],y)$
is an automorphism. It is in fact a center automorphism because $x\backslash
(xL([z,u],y)) = [x,y,[z,u]]^{-1}$ is central. By Lemma \ref{lm:centaut}, the
mapping $x\mapsto x\backslash (xL([z,u],y)) = [x,y,[z,u]]^{-1}$ is a
homomorphism, and this shows that $[x,y,[z,u]]$ is linear in $x$.

The linearity of $[x,y,[z,u,v]]$ in $x$ follows analogously, using the left
inner mapping $L([z,u,v],y)$ and the fact that $[y,[z,u,v]]$, $[x,y,[z,u,v]]\in
Z(Q)$.

Finally, the subloops $\langle x, y, [z,u]\rangle$, $\langle x, y,
[z,u,v]\rangle$ are proper in $Q$, the associator mapping is therefore
alternating on them by Proposition \ref{pr:class2}, and so $(x,y,z,u)\mapsto
[x,y,[z,u]]$ and $(x,y,z,u,v)\mapsto [x,y,[z,u,v]]$ are linear in $y$, too.
\end{proof}

\section{Moufang loops of odd order with commuting inner mappings}

In this section we prove our first main result:

\begin{theorem}\label{th:main}
Let $Q$ be a Moufang loop of odd order with $\inn{Q}$ abelian. Then $Q$ has
nilpotency class at most $2$.
\end{theorem}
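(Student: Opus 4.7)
My plan is to combine a minimum-counterexample induction with the structural results from Sections 2 and 3, exploiting the unique $2$-divisibility that odd order provides. Let $Q$ be a counterexample of smallest order: then $\inn{Q}$ is abelian, $|Q|$ is odd, and $\cl{Q}\ge 3$. Any quotient $Q/N$ inherits the hypotheses (its inner mapping group is a homomorphic image of $\inn{Q}$, hence abelian), so minimality gives $\cl{Q/N}\le 2$ whenever $N\unlhd Q$ is nontrivial. Applied to $N=Z(Q)$ this yields $\cl{Q}\le 3$, so we may assume $\cl{Q}=3$ and aim for a contradiction.

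The next step is to show that every $2$-generated subloop of $Q$ has class at most $2$. By diassociativity, $H=\langle x,y\rangle$ is a group, and its inner mapping group is generated by the conjugations $T_H(z)$, $z\in H$. A direct computation shows that the restriction $T_Q(z)|_H$ equals $T_H(z)$; since the conjugations $T_Q(z)$ all commute by hypothesis, so do their restrictions, so $\inn{H}$ is abelian. As $H$ is a group, the textbook identity $H/Z(H)\cong\inn{H}$ yields $\cl{H}\le 2$. Lemma \ref{lm:commutativeBruck} then implies that the associated Bruck loop $Q(1/2)$ is commutative.

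Next I would verify the hypothesis of Proposition \ref{pr:class3} that every proper subloop has class at most $2$. The subloops actually appearing in that proof, such as $\langle [x,y],[x',y],z\rangle$, $\langle x,y,[z,u]\rangle$, $\langle x,y,[z,u,v]\rangle$, are generated by a few ambient elements together with commutators and associators; since commutators and associators are non-generators of the finite nilpotent loop $Q$ (the non-generator observation from Section~2.2), each such subloop is contained in a proper subloop of $Q$ provided $\rank{Q}$ is sufficiently large. The residual low-rank cases can be handled by direct calculation inside the $2$-generated case already established, combined with commutativity of $Q(1/2)$. I expect this step to be the main technical obstacle.

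With Proposition \ref{pr:class3} in hand, the endgame is to use $\inn{Q}$ abelian to annihilate all ``depth-two'' iterated commutators and associators. Starting from $L(z,y)L(v,u)=L(v,u)L(z,y)$ and the identity $wL(z,y)=w[w,y,z]^{-1}$, multilinearity from Proposition \ref{pr:class3} reduces the relation to an equation in $Z(Q)$ of the shape $[[w,y,z],u,v]=[[w,v,u],y,z]$, and similarly for the other generating inner mappings $T(x)$ and $R(x,y)$ using their known companion structure. Combined with the alternating properties in Proposition \ref{pr:class2} and Proposition \ref{pr:class3}, these identities symmetrize variables in such a way that all of $[[x,y],z]$, $[[x,y,z],u]$, $[x,y,[z,u]]$, $[x,y,[z,u,v]]$ become squares of themselves, hence trivial by unique $2$-divisibility (odd order). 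This forces $Q'\le Z(Q)$, contradicting $\cl{Q}=3$ and completing the proof.
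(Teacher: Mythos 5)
Your setup is sound and overlaps with the paper's: a minimal counterexample, reduction to $\cl{Q}=3$, the observation that $2$-generated subloops are groups with abelian inner mapping group (hence of class at most $2$), commutativity of $Q(1/2)$ via Lemma \ref{lm:commutativeBruck}, and the multilinearity package of Proposition \ref{pr:class3}. (Two small remarks: you need $Z(Q)\neq 1$, hence nilpotency of $Q$, which requires the Niemenmaa--Kepka result and is not automatic; and the hypothesis of Proposition \ref{pr:class3} is obtained most cleanly by noting that every proper subloop $H$ has $\inn{H}$ abelian, since inner mappings of $H$ are restrictions of inner mappings of $Q$, so minimality applies directly --- your detour through non-generators is unnecessary and does not by itself verify the hypothesis for \emph{all} proper subloops.)

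The genuine gap is the endgame. The strategy ``commuting inner mappings give symmetry relations, which combined with alternating multilinearity make each iterated commutator/associator equal to its own inverse'' works only for the three-variable map $[[x,y],z]$, and even there only after one already knows $[[x,y],z,u]=1$ (so that $[x,y]\in Z(Q(1/2))$ and the expansion of $xT(y)T(z)$ closes up). It fails for the four- and five-variable maps. For $(x,y,z,u)\mapsto[[x,y,z],u]$, being alternating and linear does not yield $f=f^{-1}$ when $\rank{Q}\ge 4$; the paper needs the separate computation $[[x,y,z],u]=[[x,u],[y,u],[z,u]]$, which vanishes because it lives in a proper subloop, and for $[[x,y],z,u]$ it needs the automorphism trick with $T(y)$ giving $[[x,y],z,u]=[[x,z,u],y]$. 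For $(x,y,z,u,v)\mapsto[[x,y,z],u,v]$, your proposed relation $[[w,y,z],u,v]=[[w,v,u],y,z]$ corresponds to the \emph{even} permutation $(y\,v)(z\,u)$, so it cannot force $f^2=1$; the paper instead shows that left inner mappings of $Q$ and of $Q(1/2)$ coincide, concludes that $\inn{Q(1/2)}$ is abelian, and invokes Bruck's theorem that commutative Moufang loops satisfy $\cl{\cdot}=\cl{\inn{\cdot}}+1$. You also never extract the identity $[[x,y,z],x]=1$ from $T(x)L(y,z)=L(y,z)T(x)$ (Lemma \ref{lm:small}); without it and the resulting Bruck identities of Lemma \ref{lm:bruck} (full skew-symmetry of the associator, $[x^n,y,z]=[x,y,z]^n$, etc.), none of the alternating claims for the iterated maps get off the ground. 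As written, the proof does not close.
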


Recall that a finite Moufang loop is of odd order if and only if it is
(uniquely) $2$-divisible.

Call $Q$ a \emph{minimal counterexample} to Theorem \ref{th:main} if $Q$ is a
uniquely $2$-divisible Moufang loop, $\inn{Q}$ is abelian, $\cl{Q}=3$,
$\cl{H}<3$ for every proper subloop $H$ of $Q$, and $\rank{Q}\ge 3$.

If Theorem \ref{th:main} does not hold, then there is indeed a minimal
counterexample to it, as defined above. To see this, consider any
counterexample $Q$ to Theorem \ref{th:main}, a $2$-divisible Moufang loop with
$\inn{Q}$ abelian such that $\cl{Q}>2$. Then $Q$ is nilpotent by \cite{NiKe2}
(we need $|Q|<\infty$ here), and upon replacing $Q$ with a suitable factor
loop, we can assume that $\cl{Q}=3$. Every strictly descending chain of
subloops of $Q$ of nilpotency class three has a minimal element, and upon
replacing $Q$ with that minimal element, we can assume that $\cl{H}<3$ for
every proper subloop $H$ of $Q$. Finally, Theorem \ref{th:main} holds for
groups, so we must have $\rank{Q}\ge 3$ by diassociativity.

\begin{lemma}\label{lm:small}
Let $Q$ be a Moufang loop and let $x$, $y$, $z\in Q$ be such that $T(x)L(y,z) =
L(y,z)T(x)$. Then $[[x,y,z],x]=1$.
\end{lemma}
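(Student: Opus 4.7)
The plan is to test the operator identity $T(x)L(y,z) = L(y,z)T(x)$ on the single element $x$ itself. Since $x$ is fixed by the conjugation $T(x)$, this choice collapses the equation into a plain commutation statement and avoids having to manipulate the mappings at arbitrary points.

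For the left-hand side, power-associativity in $\langle x\rangle$ gives $xT(x) = x^{-1}(xx) = x$, so $xT(x)L(y,z) = xL(y,z)$. Using the quoted Bruck formula $xL(z,y) = x[x,y,z]^{-1}$ with $y$ and $z$ suitably interchanged, I may write $xL(y,z) = xw^{-1}$, where $w$ denotes the associator in the form that matches the lemma, namely $w=[x,y,z]$. For the right-hand side, $xL(y,z)T(x) = (xw^{-1})T(x) = x^{-1}\bigl((xw^{-1})x\bigr)$. The key observation is that all products of $x, x^{-1}, w, w^{-1}$ that now appear live inside the two-generated subloop $\langle x, w\rangle$, which is a group by the diassociativity of Moufang loops. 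Reassociating freely inside this group gives $x^{-1}(xw^{-1}x) = w^{-1}x$.

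Equating the two evaluations yields $xw^{-1} = w^{-1}x$, i.e.\ $[w,x]=1$, which is precisely $[[x,y,z],x]=1$. I do not anticipate any real obstacle: once the test element $x$ is chosen, the calculation is a single unwinding in which diassociativity and the Bruck formula do all the work. The only bookkeeping point is to match the argument order in the associator produced by the Bruck formula with the statement of the lemma; this is cosmetic, and all nonassociative manipulations are legal because they take place inside the associative subloop $\langle x, w\rangle$.
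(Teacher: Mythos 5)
Your proof is correct and is essentially the paper's own argument: evaluate both sides of the operator identity at the test element $x$, use $xT(x)=x$ and the Bruck formula $xL(z,y)=x[x,y,z]^{-1}$, and reassociate inside the group $\langle x,[x,y,z]\rangle$ to conclude that $x$ commutes with the associator. The argument-order point you flag is present in the paper as well (its proof silently uses $L(z,y)$ where the hypothesis names $L(y,z)$, which strictly yields $[[x,z,y],x]=1$ from the stated hypothesis), and it is harmless in every application since there $\inn{Q}$ is abelian and the hypothesis holds for all pairs.
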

\begin{proof}
Once again, $xL(z,y) = x[x,y,z]^{-1}$ in any Moufang loop. Thus $[x,y,z]^{-1}x
= (x[x,y,z]^{-1})T(x) = xL(z,y)T(x) = xT(x)L(z,y) = xL(z,y) = x[x,y,z]^{-1}$.
\end{proof}

The identity $[[x,y,z],x]=1$ plays a prominent role in the theory of Moufang
loops, as the following result of Bruck shows.

\begin{lemma}\label{lm:bruck}
Let $Q$ be a Moufang loop. Then $Q$ satisfies all or none of the following
identities:
\begin{enumerate}
\item[(i)] $[[x,y,z],x]=1$,
\item[(ii)] $[x,y,[y,z]]=1$,
\item[(iii)] $[x,y,z]^{-1} = [x^{-1},y,z]$,
\item[(iv)] $[x,y,z]^{-1} = [x^{-1},y^{-1},z^{-1}]$,
\item[(v)] $[x,y,z] = [x,zy,z]$,
\item[(vi)] $[x,y,z] = [x,z,y^{-1}]$,
\item[(vii)] $[x,y,z] = [x,xy,z]$.
\end{enumerate}
When these identities hold, the associator $[x,y,z]$ lies in the center of
$\langle x,y,z\rangle$, and the following identities hold for all integers $n$:
\begin{gather*}
    [x,y,z] = [y,z,x] = [y,x,z]^{-1},\\
    [x^n,y,z] = [x,y,z]^n,\\
    [xy,z] = [x,z][[x,z],y][y,z][x,y,z]^3,\\
    xL(y,z) = xR(y,z) = x[x,y,z].
\end{gather*}
\end{lemma}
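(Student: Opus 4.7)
The plan is to establish the seven-way equivalence by a cyclic chain of short implications, and then to read off the four displayed consequences once any one identity holds. The central tool throughout is the formula $xL(z,y) = x[x,y,z]^{-1}$ recorded earlier in the text, combined with Moufang's theorem and the pseudo-automorphism/companion description of $L(z,y)$.

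Starting from (i), I would first upgrade ``commutes with $x$'' to ``central in $\langle x,y,z\rangle$''. Moufang's theorem supplies the loose permutation-symmetries $[x,y,z]^{-1}=[y,x,z]=[x,z,y]$ (obtained by reassociating $(yx)z$ and $x(zy)$ inside the subloop generated by $x,y,z$, which is a group as soon as one associator vanishes); applying (i) with arguments permuted then gives $[[x,y,z],y]=[[x,y,z],z]=1$, so $[x,y,z]$ is central in $\langle x,y,z\rangle$, which in turn yields $[x,y,z]=[y,z,x]=[y,x,z]^{-1}$. Under this centrality, $L(z,y)$ is a pseudo-automorphism whose companion $[y,z]$ lies in the centre of $\langle x,y,z\rangle$, so it acts as a center automorphism and Lemma \ref{lm:centaut} makes $x\mapsto x\backslash(xL(z,y))=[x,y,z]^{-1}$ a homomorphism in $x$. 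That homomorphism property is exactly (iii); together with the symmetries it gives (iv).

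The remaining identities fall out by direct rewriting inside $\langle x,y,z\rangle$ using centrality of the associator: (ii) comes from (i) applied to $x,y,[y,z]$ together with $[y,[y,z]]=1$ by diassociativity; (v), (vi), (vii) follow by expanding $(x(zy))z$, $(x z)y^{-1}$, and $(x(xy))z$ via the defining identity $(uv)w=(u(vw))\cdot[u,v,w]$ and collecting the central associator terms. For the reverse arrows I would close the cycle by short manipulations of the same flavour: for instance, rewriting (v) as $1=[x,y,z]^{-1}[x,zy,z]$, expanding $(x(zy))z=((xz)y)z\cdot (\text{central})$ and comparing with $x((zy)z)$ recovers $[[x,y,z],x]=1$; the other cases are analogous. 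The main obstacle is bookkeeping: each reverse implication rests on a reassociation that must be arranged so only identities already known to hold are invoked, so the cycle must be ordered with care.

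Once (i)--(vii) are known to be equivalent and $[x,y,z]$ is central in $\langle x,y,z\rangle$, the four closing formulas are routine. The symmetries $[x,y,z]=[y,z,x]=[y,x,z]^{-1}$ were already derived; the power rule $[x^n,y,z]=[x,y,z]^n$ follows by induction from the linearity in the first slot supplied by Lemma \ref{lm:centaut}; the expansion of $[xy,z]$ is a computation inside the two-generated (hence associative) subloop $\langle xy,z\rangle$ combined with the linearity and the commutator of commutators; and $xL(y,z)=xR(y,z)=x[x,y,z]$ is immediate from $xL(z,y)=x[x,y,z]^{-1}$, identity (iii) applied to $x^{-1}$, and the relation $R(x,y)=L(x^{-1},y^{-1})$ recalled in the preliminaries.
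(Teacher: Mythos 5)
The paper does not reprove this statement: items (i)--(vii), the centrality of the associator, and the first three displayed consequences are quoted verbatim from Bruck's Lemma VII.5.5, and the only thing actually proved is the last line $xL(y,z)=xR(y,z)=x[x,y,z]$, via a one-line computation from (iv), (vi) and $R(y,z)=L(y^{-1},z^{-1})$. Your attempt to rebuild Bruck's lemma from scratch is therefore a different route, but as it stands it has genuine gaps.

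The most serious one is your opening move: you invoke the ``loose permutation-symmetries'' $[x,y,z]^{-1}=[y,x,z]=[x,z,y]$ as if they were supplied by Moufang's theorem in an arbitrary Moufang loop. They are not. Moufang's theorem only tells you that the \emph{vanishing} of the associator is a symmetric condition; when $[x,y,z]\ne 1$ the subloop $\langle x,y,z\rangle$ is not a group and you cannot ``reassociate'' to compare $[x,y,z]$ with $[y,x,z]$ or $[x,z,y]$. Indeed these symmetries appear in the lemma's own conclusion as consequences of (i)--(vii); assuming them at the outset to deduce centrality of $[x,y,z]$ from (i) is circular, and without them your entire chain of implications collapses. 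A second concrete gap: to get linearity of $[x,y,z]$ in $x$ you assert that the companion $[y,z]$ of $L(z,y)$ is central in $\langle x,y,z\rangle$, so that $L(z,y)$ is a (center) automorphism. But one of the very consequences you are trying to prove is $[xy,z]=[x,z][[x,z],y][y,z][x,y,z]^3$, in which the term $[[x,z],y]$ is generally nontrivial --- so commutators need not be central (or nuclear) in $\langle x,y,z\rangle$, and Lemma \ref{lm:centaut} does not apply. Finally, the reverse implications are dismissed as ``bookkeeping'' and the $[xy,z]$ expansion is claimed to take place ``inside $\langle xy,z\rangle$'' even though its right-hand side involves $x$, $y$, $z$ separately; neither of these can be accepted as a proof. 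The honest options are to cite Bruck, as the paper does, or to carry out his two-page computation in full.
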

\begin{proof}
The only observation not proved in \cite[Lemma VII.5.5]{Bruck} is
$xL(y,z)=xR(y,z) = x[x,y,z]$. But $xL(y,z)=x[x,z,y]^{-1}=x[x,y,z] =
x[x,y^{-1},z^{-1}] = xL(y^{-1},z^{-1}) = xR(y,z)$.
\end{proof}

\begin{lemma}\label{lm:xyzu}
Let $Q$ be a minimal counterexample to Theorem \ref{th:main}. Then
\begin{displaymath}
    [[x,y,z],u]=[[x,y],z,u]=1
\end{displaymath}
for every $x$, $y$, $z$, $u\in Q$.
\end{lemma}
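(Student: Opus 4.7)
The plan is to show that the two maps $\alpha(x,y,z,u) := [[x,y,z],u]$ and $\beta(x,y,z,u) := [[x,y],z,u]$ vanish identically on $Q$, by first establishing that they are fully alternating multilinear maps with values in $Z(Q)$, and then exploiting the hypothesis that every proper subloop of $Q$ has class at most $2$.

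Since $\inn{Q}$ is abelian, $T(u)L(y,z) = L(y,z)T(u)$ for all $u,y,z$, so Lemma~\ref{lm:small} delivers identity~(i) of Lemma~\ref{lm:bruck}, namely $[[x,y,z],x]=1$ for all $x,y,z \in Q$. All equivalent identities of Lemma~\ref{lm:bruck} therefore hold: in particular, the associator is alternating, $xL(y,z) = x[x,y,z]$, and combining alternation with $[[a,b,c],a]=1$ gives $[[x,y,z],y] = [[x,y,z],z] = 1$, so $\alpha$ vanishes whenever $u \in \{x,y,z\}$. Proposition~\ref{pr:class3} then supplies multilinearity of $\alpha$ and $\beta$ in all four variables. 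A brief polarization---expanding $\alpha(xv,y,z,xv) = [[xv,y,z],xv] = 1$ via multilinearity in positions $1$ and $4$ and cancelling the diagonal terms $\alpha(x,y,z,x)$ and $\alpha(v,y,z,v)$---yields $\alpha(x,y,z,v)\,\alpha(v,y,z,x) = 1$, showing $\alpha$ is alternating between positions $1$ and $4$. Analogous polarizations in the other pairs make $\alpha$ fully alternating, and parallel arguments, using $[x,x]=1$ and alternation of the outer associator, give the corresponding alternations of $\beta$.

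With this structure in hand, I fix $x,y,z,u \in Q$ and set $H := \langle x,y,z,u\rangle$. If $H$ is a proper subloop of $Q$, the minimality hypothesis gives $\cl{H}\le 2$, so $H' \subseteq Z(H)$ contains both $[x,y,z]$ and $[x,y]$; hence $[x,y,z]$ commutes with $u$ and $[x,y]$ associates with $(z,u)$, which forces $\alpha(x,y,z,u) = \beta(x,y,z,u) = 1$. If instead $H = Q$ and some three-element subset of $\{x,y,z,u\}$ already generates $Q$, I use alternation to place that triple $(a,b,c)$ into the first three arguments; then the map $v \mapsto \alpha(a,b,c,v)$ is a loop homomorphism $Q \to Z(Q)$ (linearity in the last slot) that vanishes on $a,b,c$ and therefore on all of $Q$, yielding $\alpha(x,y,z,u) = 1$, and $\beta$ is treated analogously.

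The main obstacle is the residual case in which $\rank{Q} = 4$ and $\{x,y,z,u\}$ is a minimum generating set, so that no three-element subset generates $Q$ and the subloop reduction fails for every triple. Here I expect to extract an additional vanishing identity from the abelianness of $\inn{Q}$ beyond what Lemma~\ref{lm:small} provides---specifically, identity~(ii) of Lemma~\ref{lm:bruck}, $[x,y,[y,z]] = 1$, together with its polarized form yielding an antisymmetry of $[x,y,[z,u]]$ in its second and third arguments---and then combine this extra symmetry with the alternating multilinear structure to force the $\rank{Q}=4$ value of $\alpha$ (and of $\beta$, by a parallel argument) to be trivial.
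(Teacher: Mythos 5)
Your setup is sound and matches the paper's: Lemma \ref{lm:small} gives $[[x,y,z],x]=1$, hence all of Lemma \ref{lm:bruck}; Proposition \ref{pr:class3} gives multilinearity of both maps; and vanishing on the diagonals then makes them alternating. Your handling of the easy cases is also correct, and in fact slightly different in organization from the paper (you case-split on whether $\langle x,y,z,u\rangle$ is proper, whereas the paper splits on $\rank{Q}=3$ versus $\rank{Q}\ge 4$ and then argues uniformly). But the proof is not complete: the case you label the ``residual case'' is the entire substance of the lemma, and what you offer for it does not work. Identity (ii) of Lemma \ref{lm:bruck} is \emph{equivalent} to identity (i), so it carries no information beyond what you already used to establish the alternating property; and an alternating multilinear map in four variables on a rank-$4$ loop need not vanish (think of the determinant on $\mathbb{Z}^4$), so ``alternating multilinear plus one more symmetry'' cannot by itself force triviality. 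You also silently drop the case $\rank{Q}=3$ with $\langle x,y,z,u\rangle=Q$ but no generating $3$-subset among $x,y,z,u$; that one is easily repaired by reducing modulo $Q'$ and expanding in a generating set of size $3$, but it is a gap as written.

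The missing ideas are the two computations that let the paper escape the generating quadruple. For $[[x,y,z],u]$ with $\rank{Q}\ge 4$: every $3$-generated subloop is proper, so the commutator is linear there, and expanding $[x,y,z]=(x(yz))^{-1}\cdot (xy)z$ gives
\begin{displaymath}
[[x,y,z],u]=\bigl([x,u]\cdot [y,u][z,u]\bigr)^{-1}\bigl([x,u][y,u]\cdot [z,u]\bigr)=[[x,u],[y,u],[z,u]],
\end{displaymath}
an associator of three commutators. Since commutators are non-generators, $\langle x,u,[y,u],[z,u]\rangle$ is proper of class at most $2$, so these three elements are nuclear there and the associator vanishes --- even though $\{x,y,z,u\}$ generates $Q$. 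For $[[x,y],z,u]$ one writes $[x,y]=x^{-1}\cdot xT(y)$, notes that $\langle x^{-1},xT(y),z,u\rangle$ is proper (again because $[x,y]$ is a non-generator), applies linearity of the associator in the first slot, and uses the computation $[xT(y),z,u]=[x,z,u]T(y)$ (valid because $T(y)$ restricts to an automorphism of a suitable proper subloop, its companion $y^3$ being nuclear) to reduce everything to $[[x,z,u],y]$, which is zero by the first part. Without some such mechanism for converting the bad quadruple into data living inside a proper subloop, your argument cannot close.
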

\begin{proof}
By Lemma \ref{lm:small}, $[[x,y,z],x]=1$. By Lemma \ref{lm:bruck}, the
associator mapping is alternating. By Proposition \ref{pr:class3}, the mappings
$(x,y,z,u)\mapsto [[x,y,z],u]$, $(x,y,z,u)\mapsto [[x,y],z,u]$ are linear.
Parts (i) and (ii) of Lemma \ref{lm:bruck} then imply that the two mappings are
alternating.

If $\rank{Q}=3$, we conclude that $[[x,y,z],u]=[[x,y],z,u]=1$ for every $x$,
$y$, $z$, $u\in Q$, since the expressions are trivial on any $3$ elements. For
the rest of the proof, we can therefore assume that $\rank{Q}\ge 4$.

Then any three elements generate a proper subloop of nilpotency class at most
$2$. Consequently, by Proposition \ref{pr:class2}, $(x,y)\mapsto [x,y]$ is
linear, and we have
\begin{align*}
    [[x,y,z],u] &= [(x(yz))^{-1}\cdot (xy)z,u] =
    [x(yz),u]^{-1}[(xy)z,u]\\
    &=([x,u]\cdot [y,u][z,u])^{-1}([x,u][y,u]\cdot [z,u])
    =[[x,u],[y,u],[z,u]].
\end{align*}
Now, $\langle x, u, [y,u], [z,u]\rangle$ is a proper subloop of $Q$, so
$[[x,y,z],u] = [[x,u],[y,u],[z,u]]=1$.

Another proper subloop of $Q$ is $H=\langle x^{-1},xL(z,u),y\rangle$. The
restriction of $T(y)$ to $H$ is then an automorphism of $H$, since the
companion $y^3$ of $T(y)$ is nuclear by Proposition \ref{pr:class2}. Using
$xL(y,z)=x[x,z,y]^{-1}$, we calculate
\begin{align*}
    [xT(y),z,u]^{-1} &= (xT(y))^{-1}\cdot xT(y)L(u,z)
    = x^{-1}T(y)\cdot xL(u,z)T(y)\\
    &= (x^{-1}\cdot xL(u,z))T(y) = [x,z,u]^{-1}T(y),
\end{align*}
so $[xT(y),z,u] = [x,z,u]T(y)$.

Yet another proper subloop of $Q$ is $\langle x^{-1},[x,y],z,u\rangle = \langle
x^{-1},xT(y),z,u\rangle$. By Proposition \ref{pr:class2} and the above
calculation, we have
\begin{align*}
    [[x,y],z,u] &= [x^{-1}\cdot xT(y),z,u] = [x^{-1},z,u][xT(y),z,u]\\
    &=[x,z,u]^{-1}\cdot [x,z,u]T(y) = [[x,z,u],y] = 1.
\end{align*}
\end{proof}

\begin{lemma}\label{lm:xyz}
Let $Q$ be a minimal counterexample to Theorem \ref{th:main}. Then
\begin{displaymath}
    [[x,y],z]=1
\end{displaymath}
for every $x$, $y$, $z\in Q$.
\end{lemma}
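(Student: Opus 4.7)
The plan is to turn the commutativity $T(y)T(z)=T(z)T(y)$, coming from $\inn{Q}$ being abelian, into a usable identity and then bilinearise it. Before any computation I would record two structural facts already available from Lemma \ref{lm:xyzu} and the alternating property of the associator (Lemma \ref{lm:bruck}): namely, $[[x,y],z,u]=1$ upgrades to $[x,y]\in N(Q)$, and $[[x,y],z]\in Z(Q)$ because $Q'\subseteq Z_2(Q)$ in any class-three loop. These two facts are what allow free reassociation in every subsequent manipulation.

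Using $[x,y]\in N(Q)$ and the defining identity $xy=(yx)[x,y]$, one immediately obtains the clean formula $xT(y)=x[x,y]$. I would then compute $xT(y)T(z)$ by unwinding $(x[x,y])\cdot z$, repeatedly exploiting that $[x,y]$ is nuclear and $[[x,y],z]$ is central. The outcome is
\begin{displaymath}
    xT(y)T(z) = x\cdot [x,z][x,y]\cdot [[x,y],z],
\end{displaymath}
and, by symmetry, the analogous formula with $y$ and $z$ swapped. Equating the two, cancelling $x$, and rewriting $[x,z][x,y]=[x,y][x,z]\cdot[[x,y],[x,z]]^{-1}$ inside the group $N(Q)$ turns the relation, together with the alternating identity $[[x,z],y]=[[x,y],z]^{-1}$, into the key identity
\begin{displaymath}
    [[x,y],[x,z]] = [[x,y],z]^2.
\end{displaymath}

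The second phase substitutes $x\mapsto xw$ in the key identity to force symmetry between $x$ and $w$. By Proposition \ref{pr:class3} the expression $[[x,y],z]$ is linear in $x$, so the right-hand side squares to $[[x,y],z]^2[[w,y],z]^2$. For the left-hand side I would first note $[xw,y]\equiv[x,y][w,y]$ and $[xw,z]\equiv[x,z][w,z]$ modulo $Z(Q)$ (bilinearity of the commutator in the class-two loop $Q/Z(Q)$), and then use that commutators of elements of $Q'\subseteq N(Q)$ with central values expand bilinearly, to obtain
\begin{displaymath}
    [[xw,y],[xw,z]] = [[x,y],[x,z]]\cdot[[x,y],[w,z]]\cdot[[w,y],[x,z]]\cdot[[w,y],[w,z]].
\end{displaymath}
The diagonal terms are eaten by the key identity, leaving $[[x,y],[w,z]]\cdot[[w,y],[x,z]]=1$. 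Setting $w=x$ yields $[[x,y],[x,z]]^2=1$, so unique $2$-divisibility of $Z(Q)$ gives $[[x,y],[x,z]]=1$, the key identity then forces $[[x,y],z]^2=1$, and one last application of $2$-divisibility completes the proof.

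The main obstacle I expect is the first computation: each of the four or five reassociations in unwinding $xT(y)T(z)$ must be justified either by the nuclearity of $[x,y]$ or by the centrality of $[[x,y],z]$, and a hidden correction term missed here would destroy the cancellations in the later bilinear expansion. Once the key identity is in place, what remains is routine algebra inside the abelian group $Z(Q)$.
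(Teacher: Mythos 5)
Your argument is correct, and it reaches the conclusion by a genuinely different route from the paper. The paper passes to the associated Bruck loop $Q(1/2)$: using Lemma \ref{lm:xyzu}, the Moufang theorem and Corollary \ref{cr:groupBruck} it shows $[x,y]\in Z(Q(1/2))$, computes $xT(y)T(z)=x*[x,y]*[x,z]*[[x,y],z]$ via Lemma \ref{lm:auxiliaryBruck}, and the commutativity of $*$ makes the two diagonal terms interchangeable, so commuting conjugations give $[[x,y],z]=[[x,z],y]$ at once; the alternating property then yields $[[x,y],z]^2=1$. You instead stay inside $Q$: the same input $[[x,y],z,u]=1$ plus the Moufang theorem gives $[x,y]\in N(Q)$, whence $xT(y)=x[x,y]$ and $xT(y)T(z)=x[x,z][x,y][[x,y],z]$, all reassociations being justified by nuclearity of commutators and centrality of $[[x,y],z]$ (which follows from $Q'\le Z_2(Q)$). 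The price of avoiding the Bruck loop is the extra term $[[x,y],[x,z]]$ coming from the non-commutativity of $[x,y]$ and $[x,z]$, which turns the relation into $[[x,y],[x,z]]=[[x,y],z]^2$; your polarization $x\mapsto xw$, using the linearity from Proposition \ref{pr:class3} and well-definedness of commutators modulo $Z(Q)$, then correctly forces $[[x,y],[x,z]]^2=1$ and the conclusion follows by odd order. The trade-off: the paper's route is shorter once the Bruck-loop lemmas are in place, while yours uses only the commutator calculus already developed and is self-contained at this point of the argument. As a small remark, your second phase can be bypassed: since $(a,b,c)\mapsto[[a,b],c]$ is linear and alternating, $[x,[x,z]]=[[x,z],x]^{-1}=1$, and then $[[x,y],[x,z]]=[[x,[x,z]],y]^{-1}=1$ directly, so the key identity alone already gives $[[x,y],z]^2=1$.
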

\begin{proof}
Let $Q(1/2)=(Q,*)$ be the associated Bruck loop. By Lemma
\ref{lm:commutativeBruck}, $(Q,*)$ is a commutative Moufang loop. Let
$K=\langle [x,y],z,u\rangle$. By Lemma \ref{lm:xyzu}, $[[x,y],z,u]=1$, and the
Moufang theorem implies that $K$ is an associative subloop of $Q$, necessarily
with $\cl{K}\le 2$. Then Corollary \ref{cr:groupBruck} shows that $K(1/2)$ is
an abelian group. This means that $[x,y]\in Z(Q(1/2))$ for every $x$, $y\in Q$.

Calculating in $\langle x,y\rangle$, we have $xT(y) = x*[x,y]$, by Lemma
\ref{lm:auxiliaryBruck}(i), and thus
\begin{displaymath}
    xT(y)T(z) = x*[x,y]*[x*[x,y],z].
\end{displaymath}
The commutator mapping is linear in the group $\langle [x,y],x,z\rangle$, so
\begin{align*}
    [x*[x,y],z] &= [x^{1/2}[x,y]x^{1/2},z] = [x^{1/2},z][[x,y],z][x^{1/2},z]\\
    &=[x,z]^{1/2}[[x,y],z][x,z]^{1/2} = [x,z]*[[x,y],z],
\end{align*}
where we have used Lemma \ref{lm:auxiliaryBruck}(ii). Altogether,
\begin{displaymath}
    xT(y)T(z) = x * [x,y] * [x,z] * [[x,y],z].
\end{displaymath}
Since $T(y)T(z)=T(z)T(y)$, we deduce that $[[x,y],z]=[[x,z],y]$.

On the other hand, recall that the mapping $(x,y,z)\mapsto [[x,y],z]$ is
alternating by Proposition \ref{pr:class3}, hence
$[[x,y],z]=[[x,z],y]=[[x,y],z]^{-1}$, or $[[x,y],z]^2=1$. Then $[[x,y],z]=1$
follows by unique $2$-divisibility.
\end{proof}

\begin{lemma}\label{lm:xyzuv}
Let $Q$ be a minimal counterexample to Theorem \ref{th:main}. Then
\begin{displaymath}
    [[x,y,z],u,v]=1
\end{displaymath}
for every $x$, $y$, $z$, $u$, $v\in Q$.
\end{lemma}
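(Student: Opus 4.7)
The plan is to show $F(x,y,z,u,v):=[[x,y,z],u,v]$ is a multilinear alternating $5$-form on $Q$ with values in $Z(Q)$, and then reduce to a proper subloop on which $F$ is forced to vanish. Since $\inn{Q}$ is abelian, Lemma \ref{lm:small} yields $[[x,y,z],x]=1$, so Lemma \ref{lm:bruck} applies and the associator in $Q$ is cyclic and fully alternating. In particular $F(x,y,z,u,v)=[u,v,[x,y,z]]$, which is linear in each of its five variables by Proposition \ref{pr:class3}.

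The key new input comes from the commutativity of two left inner mappings. Because $[v,u]\in Z(Q)\subseteq N(Q)$ by Lemma \ref{lm:xyz}, each $L(u,v)$ is an automorphism of $Q$. Applying $L(u,v)L(u',v')=L(u',v')L(u,v)$ to an arbitrary $w$, using $aL(u,v)=a[a,v,u]^{-1}$ together with the automorphism property of $L(u',v')$, and simplifying via Lemma \ref{lm:xyzu} (which kills both $[[w,v,u],x]$ and the commutator $[[w,v,u],[w,v',u']]$), one obtains the identity
\begin{displaymath}
    [[w,v,u],v',u']=[[w,v',u'],v,u], \qquad (\star)
\end{displaymath}
which, combined with the cyclic and alternating structure of the associator, forces $F$ to vanish whenever any two of its arguments coincide. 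Inner alternation handles the pairs $(1,2),(1,3),(2,3)$ and outer alternation handles $(4,5)$; for $(1,4)$, setting $v'=w$ in $(\star)$ yields $F(w,v,u,w,u')=F(w,w,u',v,u)=1$ by the inner repetition $[w,w,u']=1$, and $(1,5)$ is symmetric; the remaining four pairs $(2,4),(2,5),(3,4),(3,5)$ are reduced to one of these by the inner $3$-cycle $(x,y,z)\mapsto(y,z,x)$, which preserves $F$. Multilinearity together with vanishing on coincidences means $F$ is fully alternating in all five positions.

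If $\rank{Q}\ge 6$, then $\langle x,y,z,u,v\rangle$ has rank at most $5$, hence is a proper subloop of $Q$, and by minimality has class at most $2$; so $[x,y,z]$ is central in it and $F=1$. For $\rank{Q}\le 5$, multilinearity reduces the verification to tuples of generators, and full alternation dispatches $\rank{Q}\le 4$ by pigeonhole. The principal obstacle is the borderline case $\rank{Q}=5$: I would treat it by applying the automorphism $L(u,v)$ to $[e_1,e_2,e_3]$ for three generators and expanding $[e_1L(u,v),e_2L(u,v),e_3L(u,v)]$ via Proposition \ref{pr:class2}(iv) inside the proper subloop $\langle e_1,e_2,e_3,[e_1,v,u],[e_2,v,u],[e_3,v,u]\rangle$ (proper because the associator factors are non-generators of $Q$), then comparing with $[e_1,e_2,e_3]L(u,v)=[e_1,e_2,e_3]\cdot[[e_1,e_2,e_3],v,u]^{-1}$ to conclude $F(e_1,e_2,e_3,u,v)=1$.
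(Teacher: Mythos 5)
Your route is genuinely different from the paper's (which, for $\rank{Q}>3$, passes to the associated Bruck loop $Q(1/2)$, observes via Lemma \ref{lm:class2} that its left inner mappings coincide with those of $Q$, hence $\inn{Q(1/2)}$ is abelian, and then quotes Bruck's theorem that commutative Moufang loops with abelian inner mapping group have class $\le 2$). Unfortunately, as written your argument has a concrete gap at its central step, the derivation of $(\star)$. Writing $a=[w,v',u']^{-1}$, $b=[w,v,u]^{-1}$, expanding $wL(u,v)L(u',v')$ with the automorphism property gives $(wa)(b\,c)$ with $c=[[w,v,u],v',u']\in Z(Q)$, and the other order gives $(wb)(a\,c')$ with $c'=[[w,v',u'],v,u]$. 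Lemma \ref{lm:xyzu} does let you commute $a$ past $b$, but re-associating $(wa)b$ versus $(wb)a$ produces the factors $[w,a,b]$ and $[w,b,a]=[w,a,b]^{-1}$, so what actually falls out of $T$-commutativity is
\begin{displaymath}
    [[w,v',u'],v,u]=[[w,v,u],v',u']\cdot\big[w,[w,v',u']^{-1},[w,v,u]^{-1}\big]^{2}.
\end{displaymath}
The extra factor is an associator with two associator entries; it lies in $Z(Q)$ (since $A(Q)\le Z_2(Q)$) but is itself an instance of the very class of expressions you are trying to kill, and it is not disposed of by Lemma \ref{lm:xyzu} or by passing to the proper subloop $\langle w,a,b\rangle$ (class $\le 2$ does not annihilate associators). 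So $(\star)$, and with it your entire alternation argument for the mixed coincidences, is not established.

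There is a second gap in the endgame. For $\rank{Q}=5$ the comparison of $[e_1,e_2,e_3]L(u,v)=[e_1,e_2,e_3]\cdot[[e_1,e_2,e_3],v,u]^{-1}$ with $[e_1L(u,v),e_2L(u,v),e_3L(u,v)]$, expanded by trilinearity in the proper subloop you name, yields a single relation of the form
\begin{displaymath}
    [[e_1,e_2,e_3],v,u]^{-1}=\prod\nolimits_{i}[[e_i,v,u],e_j,e_k]^{-1}\cdot(\text{higher cross terms}),
\end{displaymath}
i.e.\ a product of several a priori unknown values of $F$ (plus terms such as $[[e_1,v,u],[e_2,v,u],e_3]$, which again are not obviously trivial). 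You assert this lets you ``conclude $F(e_1,e_2,e_3,u,v)=1$'', but one relation among many unknowns does not force each to vanish, and no argument is given for why the system closes up. By contrast, the paper's Bruck-loop argument avoids both difficulties entirely; if you want to keep your direct approach, you would at minimum need an independent proof that associators with two associator arguments vanish, and a genuine resolution of the rank-$5$ system rather than a single relation.
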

\begin{proof}
By Lemmas \ref{lm:small}, \ref{lm:bruck} and Proposition \ref{pr:class3},
$(x,y,z,u,v)\mapsto [[x,y,z],u,v]$ is a linear mapping. Assume for a while that
$\rank{Q}=3$. Then any expression $[[x,y,z],u,v]$ using only the $3$ generators
of $Q$ will vanish (we can use Lemma \ref{lm:bruck} to cover the case
$[[x,y,z],x,y]=1$). For the rest of the proof, we can therefore assume that
$\rank{Q}>3$.

Then $\cl{\langle x,y,z\rangle}\le 2$, which means that the associators in $Q$
and $Q(1/2)$ agree, by Lemma \ref{lm:class2}. Let $L^*(y,z)$ denote the left
inner mapping in $Q(1/2)$. Then $xL^*(y,z) = x*[x,y,z]^{-1} =
x^{1/2}[x,y,z]^{-1}x^{1/2} = x[x,y,z]^{-1} = xL(y,z)$, because
$[x^{1/2},[x,y,z]]=1$. Since the left inner mappings in $Q$ and in the
commutative Moufang loop $Q(1/2)$ agree, $\inn{Q(1/2)}$ is abelian, and so
$\cl{Q(1/2)}\le 2$ by \cite[Theorem VII.11.5]{Bruck}. It follows that
$[[x,y,z],u,v] = 1$.
\end{proof}

Theorem \ref{th:main} now follows from Lemmas \ref{lm:xyzu}, \ref{lm:xyz} and
\ref{lm:xyzuv}.

\begin{problem} Let $Q$ be an infinite uniquely $2$-divisible Moufang with $\inn{Q}$ abelian. Is $\cl{Q}\le 2$?
\end{problem}

\section{Uniquely $6$-divisible Moufang loops with commuting inner mappings}

The finiteness assumption can be removed from Theorem \ref{th:main} in case of
$6$-divisible Moufang loops, cf. Theorem \ref{th:6}.

\begin{lemma}\label{lm:TxTyTyx}
Let $Q$ be a Moufang loop in which $[[x,y,z],x]=1$ holds for every $x$, $y$,
$z\in Q$. Then
\begin{displaymath}
    zT(y)T(x) = zT(yx)\cdot [x,y,z]^{-3}
\end{displaymath}
holds for every $x$, $y$, $z\in Q$.
\end{lemma}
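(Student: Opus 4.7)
The plan is to compute $zT(y)T(x)$ directly and compare it term by term with $zT(yx)$, working inside the subloop $M=\langle x,y,z\rangle$. Under the hypothesis $[[x,y,z],x]=1$, Lemma~\ref{lm:bruck} applies in full: the associator $[x,y,z]$ lies in the center of $M$, the identities (i)--(vii) hold, and the associator is cyclically symmetric and skew under argument swap. Central associators may therefore be freely moved past any multiplication inside $M$.

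The first step is re-association. Starting from $zT(y)T(x)=x^{-1}((y^{-1}zy)x)$, I would apply $(ab)c=a(bc)\cdot[a,b,c]$ twice to push the multiplication by $x$ inside, obtaining $(y^{-1}zy)x=y^{-1}(z(yx))\cdot[z,y,x]\cdot[y^{-1},zy,x]$; then, applying $a(bc)=(ab)c\cdot[a,b,c]^{-1}$ once more collapses $x^{-1}(y^{-1}(z(yx)))$ into $(yx)^{-1}(z(yx))\cdot[x^{-1},y^{-1},z(yx)]^{-1}=zT(yx)\cdot[x^{-1},y^{-1},z(yx)]^{-1}$, using $x^{-1}y^{-1}=(yx)^{-1}$ in the diassociative subloop $\langle x,y\rangle$. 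Collecting the central associators outside produces
\[
    zT(y)T(x) = zT(yx)\cdot[x^{-1},y^{-1},z(yx)]^{-1}\cdot[z,y,x]\cdot[y^{-1},zy,x].
\]

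The second step is to show that each of the three associators on the right equals $[x,y,z]^{-1}$. The factor $[z,y,x]$ is immediately $[x,y,z]^{-1}$ by one cyclic shift and one swap. For $[y^{-1},zy,x]$, inverting the first slot yields $[y,zy,x]^{-1}$; applying Lemma~\ref{lm:bruck}(vi) to exchange slots two and three, then (v) with $c=y$ to absorb the extra factor of $y$ out of the new middle slot, and finally the cyclic and inversion rules, reduces $[y,zy,x]$ to $[x,y,z]$. For $[x^{-1},y^{-1},z(yx)]^{-1}$, successive applications of the inversion rule to the first and second slots give $[x,y,z(yx)]^{-1}$, and here I would expand the third argument through the general identity
\[
    [a,b,cd] = [a,b,c]\cdot[a,bc,d]\cdot[b,c,d]\cdot[ab,c,d]^{-1},
\]
obtained by reassociating $(ab)(cd)$ in two different ways, and systematically reduce each of the four resulting associators by Lemma~\ref{lm:bruck} together with the fact that any associator all three of whose arguments lie in a two-generator (hence associative) subloop is trivial. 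The net outcome is $[x,y,z(yx)]=[x,y,z]$.

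Multiplying the three factors then gives $[x,y,z]^{-3}$, proving the identity. The main obstacle is the reduction of $[x,y,z(yx)]$: keeping accurate track of signs through the four-term $[a,b,cd]$ expansion and its iterated applications to subexpressions, while repeatedly appealing to the Lemma~\ref{lm:bruck} identities and to diassociativity in two-generator subloops, in order to collapse each factor down to a power of the single associator $[x,y,z]$. The other two reductions, by contrast, are short.
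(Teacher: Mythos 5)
Your overall strategy is the same as the paper's: reassociate $x^{-1}((y^{-1}zy)x)$ to peel off $zT(yx)$ together with three associator factors, each central in $\langle x,y,z\rangle$ by Lemma~\ref{lm:bruck}, and show each factor equals $[x,y,z]^{-1}$. Your bracketing of $y^{-1}zy$ as $y^{-1}(zy)$ gives the factor pair $[y^{-1},zy,x][z,y,x]$ where the paper's bracketing $(y^{-1}z)y$ gives $[y^{-1}z,y,x][y^{-1},z,yx]$; this is an immaterial difference, and your reductions of $[z,y,x]$ and $[y^{-1},zy,x]$ to $[x,y,z]^{-1}$ via the cyclic/transposition symmetries and identities (v)--(vii) are correct.

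The one genuine gap is the third factor: you correctly identify that it suffices to prove $[x,y,z(yx)]=[x,y,z]$, but you do not carry this out. Your four-term expansion $[a,b,cd]=[a,b,c][a,bc,d][b,c,d][ab,c,d]^{-1}$ is valid here (every associator occurring is central in the subloop generated by its arguments, which in each case is contained in $\langle x,y,z\rangle$), and with $a=x$, $b=y$, $c=z$, $d=yx$ it reduces the claim to $[x,yz,yx]\,[y,z,yx]\,[xy,z,yx]^{-1}=1$. But none of these four terms has all its arguments in a two-generator subloop, so the auxiliary fact you invoke kills nothing at the top level; the terms $[x,yz,yx]$ and $[xy,z,yx]$ each still contain two composite arguments and would require further iterated expansions, with real risk of sign errors --- exactly the ``main obstacle'' you flag without resolving. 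The paper avoids this entirely: since $[z,y,x]$ is central in $\langle x,y,z\rangle$, one may replace the argument $z(yx)$ by $(zy)x$ without changing the associator, and then $[x^{-1},y^{-1},(zy)x]=[x^{-1},x^{-1}(y^{-1}z^{-1}),y^{-1}]=[x^{-1},y^{-1}z^{-1},y^{-1}]=[y^{-1},z^{-1},x^{-1}]^{-1}=[x,y,z]$ follows in a few lines from (vi), (vii), (iv) and the cyclic symmetry. You should either complete your expansion explicitly or substitute this direct reduction for the third factor.
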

\begin{proof}
By Lemma \ref{lm:bruck}, $[x,y,z]$ is central in $\langle x,y,z\rangle$. Note
that the following calculation takes place in $\langle x,y,z\rangle$, and that
every associator $[u,v,w]$ that appears below satisfies $\langle u,v,w\rangle =
\langle x,y,z\rangle$. Hence
\begin{align*}
    zT(y)T(x) &=x^{-1}(y^{-1}zy)x\\
        &=x^{-1}(y^{-1}z\cdot yx)[y^{-1}z,y,x]\\
        &=x^{-1}(y^{-1}\cdot z(yx))[y^{-1}z,y,x][y^{-1},z,yx]\\
        &=(x^{-1}y^{-1})z(yx)\cdot
            [y^{-1}z,y,x][y^{-1},z,yx][x^{-1},y^{-1},z(yx)]^{-1}\\
        &=zT(yx)\cdot [y^{-1}z,y,x][y^{-1},z,yx][x^{-1},y^{-1},z(yx)]^{-1}.
\end{align*}
Moreover, by Lemma \ref{lm:bruck},
\begin{align*}
    [y^{-1}z,y,x] & = [y^{-1},y^{-1}z,x ] = [y^{-1},z,x] = [x,y,z]^{-1},\\
    [y^{-1},z,yx] & = [y,yx,z] = [y,x,z] = [x,y,z]^{-1},\\
    [x^{-1},y^{-1},z(yx)] &=
        [x^{-1},y^{-1},(zy)x[z,y,x]^{-1}] = [x^{-1},y^{-1},(zy)x]\\
        &=[x^{-1},x^{-1}(y^{-1}z^{-1}),y^{-1}] = [x^{-1},y^{-1}z^{-1},y^{-1}]\\
        &=[y^{-1},y^{-1}z^{-1},x^{-1}]^{-1} = [y^{-1},z^{-1},x^{-1}]^{-1} = [x,y,z],
\end{align*}
and the result follows.
\end{proof}

\begin{lemma}\label{lm:TxyTyx}
Let $Q$ be a Moufang loop in which $[[x,y,z],x]=1$, all commutators are
nuclear, and $[[x,y],z] = [x,y,z]^2$ holds. Then
\begin{displaymath}
    zT(y)T(x) = zT(x)T(y)\cdot [x,y,z]^{-4}
\end{displaymath}
for every $x$, $y$, $z\in Q$.
\end{lemma}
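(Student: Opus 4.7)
The plan is to apply Lemma~\ref{lm:TxTyTyx} twice---directly and with $x,y$ interchanged---and then reconcile the two expressions by converting $T(yx)$ into $T(xy)$ using the nuclearity of $[x,y]$.

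Lemma~\ref{lm:TxTyTyx} gives $zT(y)T(x) = zT(yx)\cdot[x,y,z]^{-3}$ directly. The hypothesis $[[x,y,z],x]=1$ is symmetric under relabelling of $x$ and $y$ (indeed Lemma~\ref{lm:bruck} says it is equivalent to $[[y,x,z],y]=1$), so applying the same lemma with $x,y$ swapped and using $[y,x,z]=[x,y,z]^{-1}$ from Lemma~\ref{lm:bruck} yields
\begin{displaymath}
  zT(x)T(y) = zT(xy)\cdot[x,y,z]^{3}.
\end{displaymath}
To pass from $T(yx)$ to $T(xy)$, write $yx = (xy)\cdot c$ with $c = [x,y]^{-1}\in N(Q)$ (using that commutators are nuclear). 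I would first record the two auxiliary facts $T(ac) = T(a)T(c)$ and $zT(c) = z[z,c]$ for any nuclear $c$; both follow immediately from $zT(u) = u^{-1}(zu)$, the Moufang inversion $(ac)^{-1}=c^{-1}a^{-1}$, and free reassociation past the nuclear element $c$. Applied with $a=xy$ and $c=[x,y]^{-1}$ this gives $zT(yx) = zT(xy)\cdot [z,[x,y]^{-1}]$. Inside the two-generated (hence associative) subloop $\langle z,[x,y]\rangle$, both $[x,y,z]$ and $[[x,y],z]=[x,y,z]^{2}$ lie in the center (Lemma~\ref{lm:bruck}), so the group identity $[z,w^{-1}]=w[w,z]w^{-1}$ with $w=[x,y]$ collapses to $[z,[x,y]^{-1}]=[[x,y],z]=[x,y,z]^{2}$; hence
\begin{displaymath}
  zT(yx) = zT(xy)\cdot[x,y,z]^{2}.
\end{displaymath}

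Substituting this into the formula for $zT(y)T(x)$ and then eliminating $zT(xy)$ via the formula for $zT(x)T(y)$, the powers of the central associator $[x,y,z]$ may be collected freely to obtain
\begin{displaymath}
  zT(y)T(x) = zT(xy)\cdot[x,y,z]^{-1} = zT(x)T(y)\cdot[x,y,z]^{-4},
\end{displaymath}
as desired. The only non-mechanical step is verifying the two nuclear-$c$ identities $T(ac)=T(a)T(c)$ and $zT(c)=z[z,c]$; once these are established the remainder is bookkeeping in the cyclic central subgroup generated by $[x,y,z]$, and I foresee no genuine obstacles.
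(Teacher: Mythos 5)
Your overall strategy is the same as the paper's: apply Lemma~\ref{lm:TxTyTyx} twice (once with $x,y$ swapped, using $[y,x,z]=[x,y,z]^{-1}$), and bridge $T(yx)$ and $T(xy)$ by peeling off a nuclear commutator. The bookkeeping at the end is correct, and your two auxiliary facts $T(ac)=T(a)T(c)$ and $uT(c)=u[u,c]$ for nuclear $c$ are true and easy. The one place where you differ from the paper is the factorization: you write $yx=(xy)\cdot[x,y]^{-1}$, whereas the paper writes $yx=[y^{-1},x^{-1}]\cdot(xy)$ so that the conjugation by the commutator hits the bare element $z$ \emph{before} $T(xy)$ is applied.

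That choice is exactly where your write-up has a gap. With your factorization, $zT(yx)=\bigl(zT(xy)\bigr)T([x,y]^{-1})=\bigl(zT(xy)\bigr)\cdot\bigl[zT(xy),[x,y]^{-1}\bigr]$, not $zT(xy)\cdot[z,[x,y]^{-1}]$ as you assert; the argument of the commutator is $zT(xy)$, not $z$, and for a general Moufang loop (even with nuclear $c$) one does not have $[uv,c]=[u,c]$. The step can be repaired from the hypotheses: since all commutators are nuclear, $zT(xy)=z[z,xy]$, so in the group $\langle z,[z,xy],[x,y]\rangle$ one gets $[z[z,xy],c]=[z,c]^{[z,xy]}\cdot[[z,xy],c]$ with $c=[x,y]^{-1}$; the second factor is $[z,xy,c]^{2}=1$ by the hypothesis $[[a,b],d]=[a,b,d]^2$ and nuclearity of $c$, and the first equals $[z,c]=[x,y,z]^{2}$ because that element is central in $\langle x,y,z\rangle$ by Lemma~\ref{lm:bruck} and hence unmoved by conjugation by $[z,xy]$. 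So your final formula is correct, but this verification is an additional non-mechanical step you did not flag; alternatively, switch to the paper's left factorization $yx=[y^{-1},x^{-1}](xy)$, which makes the commutator land on $z$ itself and avoids the issue entirely.
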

\begin{proof}
By Lemma \ref{lm:TxTyTyx}, and since all commutators are in the nucleus, we
have
\begin{multline*}
    zT(yx) = zT([y^{-1},x^{-1}](xy)) \\
    = zT([y^{-1},x^{-1}])T(xy)
    \cdot [xy,[y^{-1},x^{-1}],z]^3 = zT([y^{-1},x^{-1}])T(xy).
\end{multline*}
By Lemma \ref{lm:bruck},
\begin{displaymath}
    zT([y^{-1},x^{-1}]) = z[z,[y^{-1},x^{-1}]]
    = z[[y^{-1},x^{-1}],z]^{-1} = z[y^{-1},x^{-1},z]^{-2} = z[x,y,z]^2.
\end{displaymath}
Since $[x,y,z]$ is central in $\langle x,y,z\rangle$ by Lemma \ref{lm:bruck},
we calculate
\begin{displaymath}
    zT(yx) = zT([y^{-1},x^{-1}])T(xy) = (z[x,y,z]^2)T(xy)
    = zT(xy)\cdot [x,y,z]^2.
\end{displaymath}
Finally, using the last equality and Lemmas \ref{lm:bruck}, \ref{lm:TxTyTyx},
we have
\begin{align*}
    zT(y)T(x) &= zT(yx)[x,y,z]^{-3} = zT(xy)[x,y,z]^{-1}
        = zT(xy)[x,y,z]^3\cdot [x,y,z]^{-4}\\
        &= zT(xy)[y,x,z]^{-3}\cdot [x,y,z]^{-4}
        = zT(x)T(y)\cdot [x,y,z]^{-4},
\end{align*}
as desired.
\end{proof}

\begin{theorem}\label{th:6}
Let $Q$ be a uniquely $6$-divisible Moufang loop with $\inn{Q}$ abelian. Then
$Q$ has nilpotency class at most $2$.
\end{theorem}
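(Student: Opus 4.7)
The plan is to verify the three hypotheses of Lemma~\ref{lm:TxyTyx}, apply it to deduce $[x,y,z]^4=1$ from commutativity of $\inn{Q}$, and then extract square roots of associators twice to kill them entirely.

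First, commutativity of $T(x)$ and $L(y,z)$ together with Lemma~\ref{lm:small} gives $[[x,y,z],x]=1$, so Lemma~\ref{lm:bruck} makes the associator $[x,y,z]$ central in $\langle x,y,z\rangle$, alternating, and makes the identity $[x^n,y,z]=[x,y,z]^n$ available. Next I would show that the associated Bruck loop $Q(1/2)$ is an abelian group. For commutativity of $Q(1/2)$, any rank-$\le 2$ subloop $H=\langle a,b\rangle$ of $Q$ is a group by diassociativity; its inner mapping group is generated purely by conjugations (since $L(x,y)$ and $R(x,y)$ are trivial in any group), and these conjugations are restrictions of conjugations in $Q$, so $\inn{H}$ inherits commutativity from $\inn{Q}$ and $\cl{H}\le 2$. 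Lemma~\ref{lm:commutativeBruck} then makes $Q(1/2)$ into a commutative Moufang loop. In any commutative Moufang loop the associator is trilinear and satisfies $[u,v,w]^3=1$, and since $Q(1/2)$ inherits unique $3$-divisibility from $Q$, we obtain $[x,y,z]=[x^{1/3},y,z]^3=1$ in $Q(1/2)$; so $Q(1/2)$ is associative, i.e., an abelian group.

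Now Lemma~\ref{lm:abelianBruck} supplies the two remaining hypotheses of Lemma~\ref{lm:TxyTyx}: $[x,y]\in N(Q)$ and $[[x,y],z]=[x,y,z]^2$. Applying Lemma~\ref{lm:TxyTyx} yields $zT(y)T(x)=zT(x)T(y)\cdot[x,y,z]^{-4}$, and commutativity of $\inn{Q}$ forces $[x,y,z]^4=1$ for every $x,y,z\in Q$. Iterating the identity $[x^{1/2},y,z]^2=[x,y,z]$ from Lemma~\ref{lm:bruck} then collapses this: first $[x,y,z]^2=[x^{1/2},y,z]^4=1$, and then $[x,y,z]=[x^{1/2},y,z]^2=1$. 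So $Q$ is associative, and abelian $\inn{Q}$ yields $\cl{Q}\le 2$ via \eqref{eq:plus1}.

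The main obstacle is establishing that $Q(1/2)$ is an abelian group, and this is precisely where the $3$-divisibility hypothesis gets used: to kill the (a priori only $3$-torsion) associators of the commutative Moufang loop $Q(1/2)$. The $2$-divisibility enters first to define $Q(1/2)$ and finally in the two rounds of square-root extraction that reduce $[x,y,z]^4=1$ to $[x,y,z]=1$.
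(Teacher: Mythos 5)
Your proof is correct and takes essentially the same route as the paper: establish $[[x,y,z],x]=1$ via Lemma \ref{lm:small}, get commutativity of $Q(1/2)$ from Lemma \ref{lm:commutativeBruck}, kill the derived subloop of $Q(1/2)$ using its exponent-$3$ property together with unique $3$-divisibility, then feed Lemma \ref{lm:abelianBruck} into Lemma \ref{lm:TxyTyx} to obtain $[x,y,z]^4=1$ and eliminate it by unique $2$-divisibility. The only differences are cosmetic: you spell out why rank-$2$ subloops have class at most $2$, and you phrase the final torsion arguments via root extraction rather than the absence of elements of order $2$ or $3$.
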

\begin{proof}
Every subloop $H$ of $Q$ with $\rank{H}=2$ is a group and hence satisfies
$\cl{H}\le 2$. By Lemma \ref{lm:commutativeBruck}, $Q(1/2)$ is a commutative
Moufang loop. By \cite[Lemma VII.5.7]{Bruck}, $x^3=1$ for every element $x$ in
the derived subloop of a commutative Moufang loop. Since $Q(1/2)$ contains no
elements of order $3$, we conclude that $Q(1/2)$ is an abelian group. By Lemma
\ref{lm:abelianBruck}, commutators are nuclear, associators are central, and
$[[x,y],z]=[x,y,z]^2$ in $Q$. Then Lemmas \ref{lm:small} and \ref{lm:TxyTyx}
yield $[x,y,z]^4=1$. Since $Q$ contains no elements of even order, we deduce
that $Q$ is a group and $\cl{Q}\le 2$.
\end{proof}

\section{Moufang $2$-loop of Cs\"org\H{o} type}\label{sc:2loop}

In this section, we show that Theorem \ref{th:main} cannot be extended to
Moufang loops of even order.

Throughout the section, let $X$ denote an associative ring and consider $X$ as
a natural $\mathbb Z$-module, that is, $nx=x+\cdots+x$ is well defined for all
$n\in \mathbb Z$ and $x\in X$.

Our construction is a generalization of a construction by R. H. Bruck. Indeed,
if $X$ is an algebra over field of characteristic $\neq 2$, then the
construction is precisely \cite[Example 3, p.128]{Bruck}. We mention that Bruck
did not establish any properties pertaining to the commutativity of the inner
mapping group.

\begin{proposition}\label{pr:bruckConstruction}
Let $X_1$ be an additive subgroup of $X$ such that $uu=0$ and $uv+vu=0$ holds
for every $u$, $v\in X_1$. For $n\ge 1$, denote by $X_n$ the additive subgroup
of $X$ generated by $u_1\cdots u_n$, where $u_i\in X_1$. Define multiplication
on the subspace $Q=X_1\times X_2\times X_3$ by
\begin{equation}\label{Eq:BruckF}
    (a,b,c)(a',b',c') = (a+a', b+b'+aa', c+c'+ba').
\end{equation}
Then $Q$ is a Moufang loop with neutral element $(0,0,0)$ and inverse
$(a,b,c)^{-1} = (-a,-b,-c+ba)$. Moreover,
\begin{align*}
    [(a,b,c),(a',b',c')] &= (0,2aa', ba'-b'a),\\
    [[(a,b,c),(a',b',c')],(a'',b'',c'')] &= (0,0,2aa'a''),\\
    [(a,b,c),(a',b',c'),(a'',b'',c'')] &= (0,0,aa'a''),\\
    (a,b,c)L((a',b',c'),(a'',b'',c'')) &= (a,b,c+aa'a''),\\
    (a,b,c)R((a',b',c'),(a'',b'',c'')) &= (a,b,c+aa'a''),\\
    (a,b,c)T((a',b',c')) &= (a,b+2aa',c+ab'+ba')
\end{align*}
hold for every $(a,b,c)$, $(a',b',c')$, $(a'',b'',c'')\in Q$.
\end{proposition}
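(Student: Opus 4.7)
The proof is a direct verification which I would organise into four steps: the loop axioms, the Moufang identity, and then the commutator/associator and inner-mapping formulas. Throughout I would fix $x=(a,b,c)$, $y=(a',b',c')$, $z=(a'',b'',c'')$ and make repeated use of the two defining relations $uu=0$ and $uv+vu=0$ for $u,v\in X_1$, together with their immediate consequence that $ab'=b'a$ for $a\in X_1$ and $b'\in X_2$ (obtained by expanding $b'$ as a sum of two-fold products and commuting $a$ past each factor, picking up two sign changes).

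First, $(0,0,0)$ is manifestly a two-sided identity, and substituting the proposed inverse into \eqref{Eq:BruckF} verifies it in one line on each side (the middle coordinate collapses by $aa=0$; the third by $\pm ba$ cancellation). The loop property follows from the triangular form of \eqref{Eq:BruckF}: in the equation $(a,b,c)(p,q,r)=(w_1,w_2,w_3)$, the first coordinate forces $p$, then the second forces $q$, and then the third forces $r$; similarly for right translations.

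The bulk of the work is the Moufang identity $x(y(xz))=((xy)x)z$. Expanding both sides by \eqref{Eq:BruckF}, the first coordinates are trivially $2a+a'+a''$. Applying $aa=0$ and $a'a=-aa'$ to the second coordinates, both sides collapse to $2b+b'+b''+2aa''+a'a''$. The third coordinates give the longest expansion, and after collecting like terms they differ by a single expression of the form $\pm aa'a$; this vanishes since $aa'a=(aa')a=-(a'a)a=-a'(aa)=0$. Identifying this one critical cancellation, and carrying out the bookkeeping cleanly, is the main obstacle; everything else is mechanical.

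The remaining formulas are then read off from their defining identities. The commutator $[x,y]$ is forced coordinate by coordinate from $xy=(yx)[x,y]$, yielding $(0,2aa',ba'-b'a)$. The associator $[x,y,z]$ is forced similarly from $(xy)z=(x(yz))[x,y,z]$; the first two coordinates of the two sides already agree, and the third differs by exactly $aa'a''$. The iterated commutator $[[x,y],z]$ is then a single instance of the commutator formula applied to $[x,y]=(0,2aa',ba'-b'a)$ and $z$: the first coordinate of $[x,y]$ vanishes, so only the middle term contributes, giving $(0,0,2aa'a'')$. For the inner mappings, I would combine Bruck's identity $xL(z,y)=x[x,y,z]^{-1}$ with the computed associator and the antisymmetry $aa''a'=-aa'a''$; the $R$-formula then follows from $R(x,y)=L(x^{-1},y^{-1})$ in Moufang loops; and for $T(y)$ I would solve $(a',b',c')(p,q,r)=xy$ directly for $(p,q,r)$, using $a'a'=0$, $a'a=-aa'$, and $b'a=ab'$ in the simplifications.
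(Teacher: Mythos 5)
Your plan is correct and, for most of the statement, follows the same route as the paper: direct coordinate verification of the identity element, the inverse, and the commutator, associator and inner-mapping formulas, with the quasigroup property read off from the triangular shape of \eqref{Eq:BruckF}. The one genuine divergence is the Moufang identity. You expand $x(y(xz))$ and $((xy)x)z$ from scratch; the paper instead first computes $[x,y,z]=(0,0,aa'a'')$ and observes that $(0,0,t)\in Z(Q)$, whence $((xy)x)z=x(y(xz))\cdot[x,y,xz][xy,x,z]$, and then checks $[x,y,xz][xy,x,z]=(0,0,aa'a''+a'aa'')=(0,0,0)$. Both work, and your bookkeeping claims are accurate: the second coordinates of both sides reduce to $2b+b'+b''+2aa''+a'a''$, and the fully expanded third coordinates differ by exactly one term $aa'a$ (arising in the third coordinate of $(xy)x$), which vanishes by the chain $aa'a=(aa')a=-(a'a)a=-a'(aa)=0$ you give. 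The paper's route is shorter once the associator formula and its centrality are in hand; yours needs no preparation and makes the ``one critical cancellation'' visible. Your derivation of the $L$-formula from $xL(z,y)=x[x,y,z]^{-1}$ together with $aa''a'=-aa'a''$, and of the $R$-formula from $R(x,y)=L(x^{-1},y^{-1})$, is equivalent to the paper's appeal to Lemma \ref{lm:bruck}.

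One warning about $T$. Your method (solve $(a',b',c')(p,q,r)=(a,b,c)(a',b',c')$) and your identity $ab'=b'a$ for $a\in X_1$, $b'\in X_2$ are both right, but carrying the computation out gives third coordinate $c+ba'-b'a=c+ba'-ab'$, not the printed $c+ab'+ba'$. The discrepancy lies in the statement, not in your method: the last line of the paper's own computation silently replaces $-b'a$ by $+ab'$. You can confirm the sign with the concrete ring of Section \ref{sc:2loop}: $(e_1,0,0)T((0,e_6,0))=(e_1,0,-e_7)$, whereas the printed formula yields $(e_1,0,e_7)$, and $2e_7\neq 0$. The slip is harmless downstream, since in Proposition \ref{pr:innQchar4} it merely changes $T((a',b',c'))=S(a',b')$ to $T((a',b',c'))=S(a',-b')$ and the set $\{S(u,v)\}$ is unchanged; but do not force your computation to match the printed formula.
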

\begin{proof}
We have $(a,b,c)(0,0,0)=(0,0,0)(a,b,c)=(a,b,c)$ for every $(a,b,c)\in Q$, so
$(0,0,0)$ is the neutral element of $Q$. Solving
$(a,b,c)(a',b',c')=(a'',b'',c'')$ for $(a,b,c)$ or $(a',b',c')$ leads to
\begin{align*}
    (a,b,c) &= (a''-a',b''-b'-a''a',c''-c'-(b''-c')a'),\\
    (a',b',c') &= (a''-a,b''-b-aa'',c''-c'b(a''-a)),
\end{align*}
respectively, where we have used the properties of $X_1$. Consequently,
\begin{displaymath}
    (a,b,c)^{-1} = (-a,-b,-c+ba).
\end{displaymath}
By straightforward calculation,
\begin{align*}
    (a,b,c)T((a',b',c'))&=(a',b',c')^{-1}((a,b,c)(a',b',c'))\\
    &=(-a',-b',-c'+b'a')(a+a',b+b'+aa',c+c'+ba')\\
    &=(a,b+2aa',c+ab'+ba').
\end{align*}
Let $x=(a,b,c)$, $y=(a',b',c')$, $z=(a'',b'',c'')$. Upon solving the equation
$xy = yx\cdot [x,y]$ for $[x,y]$, we get
\begin{displaymath}
    [(a,b,c),(a',b',c')] = (0,2aa',ba'-b'a),
\end{displaymath}
and upon solving the equation $(xy)z = x(yz)\cdot [x,y,z]$ for $[x,y,z]$, we
get
\begin{displaymath}
[(a,b,c),(a',b',c'),(a'',b'',c'')] = (0,0,aa'a'').
\end{displaymath}
It follows that $(0,0,c)\in Z(Q)$ for every $c\in X_3$, and $(0,b,c)\in N(Q)$
for every $(b,c)\in X_2\times X_3$. In particular, all commutators are in the
nucleus and all associators are in the center of $Q$.

We have $((xy)x)z = (xy)(xz)\cdot [xy,x,z] = ( x(y(xz))\cdot [x,y,xz])[xy,x,z]
= x(y(xz)) \cdot [x,y,xz][xy,x,z]$. Hence the Moufang identity $((xy)x)z =
x(y(xz))$ holds if and only if $[x,y,xz][xy,x,z]$ vanishes. With $x$, $y$, $z$
as above, we have $[x,y,xz][xy,x,z] = (0,0,aa'(a+a''))(0,0,(a+a')aa'') =
(0,0,aa'a'')(0,0,a'aa'') = (0,0,aa'a''+a'aa'') = (0,0,0)$.

Lemma \ref{lm:bruck} yields $xL(y,z) = xR(y,z) = x[x,y,z]$, finishing the proof.
\end{proof}

\begin{proposition}\label{pr:innQchar4}
Let $Q=X_1\times X_2\times X_3$ be the Moufang loop constructed in Proposition
\ref{pr:bruckConstruction}, with the same assumptions on $X$. Let $H$ be
defined on $X_1\times X_2$ with multiplication
\begin{displaymath}
    (u,v)(u',v') = (u+u',v+v'+2uu').
\end{displaymath}
Then $H$ is a group and $\inn{Q}=\{T_a;\;a\in Q\}\cong H$. Moreover, if
$4X=\{4x;\;x\in X\}=0$ then $\inn{Q}$ is an abelian group of exponent $4$.
\end{proposition}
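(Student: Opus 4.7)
\emph{Proof plan.} The plan is to show that every inner mapping of $Q$ belongs to a single parametric family indexed by $X_1\times X_2$, and then to identify the group structure on that family with $H$. Inspecting the last three formulas of Proposition~\ref{pr:bruckConstruction}, every generator of $\inn{Q}$ has the form
\[
\tau_{(p,q)}\colon (a,b,c)\longmapsto (a,\ b+2ap,\ c+aq+bp)
\]
for some $(p,q)\in X_1\times X_2$: namely $T((a',b',c'))=\tau_{(a',b')}$, while both $L((a',b',c'),(a'',b'',c''))$ and $R((a',b',c'),(a'',b'',c''))$ equal $\tau_{(0,\,a'a'')}$. Conversely, $\tau_{(p,q)}=T((p,q,0))\in\inn{Q}$, so once closure under composition is established we get $\inn{Q}=\{T_a:a\in Q\}=\{\tau_{(p,q)}:(p,q)\in X_1\times X_2\}$.

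The main technical step is the composition identity
\[
\tau_{(p,q)}\tau_{(p',q')}=\tau_{(p+p',\,q+q'+2pp')},
\]
which I would verify by direct substitution. In the third coordinate the cross term $(2ap)p'=2app'$ produces the cocycle $2pp'$, while all remaining cross terms cancel using $u^2=0$, $uv+vu=0$ in $X_1$ together with the associativity of $X$. The same calculation shows that the multiplication $(u,v)(u',v')=(u+u',v+v'+2uu')$ is associative, with identity $(0,0)$ and inverses $(u,v)^{-1}=(-u,-v)$ (note $(u,v)(-u,-v)=(0,-2u^2)=(0,0)$). Hence $H$ is a group and $(p,q)\mapsto\tau_{(p,q)}$ is a surjective homomorphism $H\to\inn{Q}$. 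Injectivity is obtained by evaluating $\tau_{(p,q)}$ at $(a,0,0)$ and $(0,b,0)$, forcing $2ap=0$, $aq=0$ and $bp=0$ for all $a\in X_1$, $b\in X_2$, and hence $(p,q)=(0,0)$ under the standing faithfulness of the ring action; this yields $\inn{Q}\cong H$.

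For the final assertion, assume $4X=0$. Using $p^2=0$ one computes $(p,q)^2=(2p,2q)$ and hence $(p,q)^4=(4p,4q)=(0,0)$, so $H$ has exponent dividing $4$. Using $pp'+p'p=0$, the commutator in $H$ evaluates to $[(p,q),(p',q')]=(0,4pp')$, which vanishes whenever $4X=0$; therefore $\inn{Q}\cong H$ is an abelian group of exponent $4$. The main obstacle throughout is purely combinatorial: one must carefully track the cross terms in the composition so that precisely the cocycle $2pp'$ (and not, e.g., $pp'-p'p$) emerges in the second coordinate of the product in $H$.
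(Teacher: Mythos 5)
Your proof is correct and follows essentially the same route as the paper: your family $\tau_{(p,q)}$ is exactly the paper's map $S(u,v)$, and the composition identity $\tau_{(p,q)}\tau_{(p',q')}=\tau_{(p+p',\,q+q'+2pp')}$ together with the identifications $T((a',b',c'))=\tau_{(a',b')}$ and $L=R=\tau_{(0,a'a'')}$ is precisely the paper's argument, as are the computations $[(u,v),(u',v')]=(0,4uu')$ and $(u,v)^4=(4u,4v)$. The one divergence is your explicit injectivity step: the paper silently identifies $\{S(u,v)\}$ with $H$, and your appeal to ``faithfulness of the ring action'' (not literally a hypothesis of the proposition, though it holds for the concrete ring used later in the paper) is, if anything, more careful than the original.
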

\begin{proof}
It is easy to see that $H$ is associative, has neutral element $(0,0)$, and
$(u,v)^{-1} = (-u,-v+2uu) = (-u,-v)$. Moreover,
\begin{align*}
    [(u,v),(u',v')] &= (u,v)^{-1}(u',v')^{-1}(u,v)(u',v')\\
        &=(-u-u',-v-v'+2uu')(u+u',v+v'+2uu')\\
        &=(0,4uu'-2(u+u')(u+u')) = (0,4uu'),
\end{align*}
and $(u,v)^4 = (2u,2v)^2 = (4u,4v)$. Hence $H$ is an abelian group of exponent
$4$ when $4X=0$.

For  $(u,v)\in H$ define the map $S(u,v):Q\to Q$ by
\begin{displaymath}
(a,b,c)S(u,v) = (a,b+2au,c+av+bu).
\end{displaymath}
As
\begin{align*}
    (a,b,c)S(u,v)S(u',v') &= (a,b+2au,c+av+bu)S(u',v')\\
        &=(a,b+2au+2au',c+av+bu+av'+(b+2au)u')\\
        &=(a,b+2a(u+u'),c+a(v+v'+2uu')+b(u+u'))\\
        &=(a,b,c)S(u+u',v+v'+2uu'),
\end{align*}
the set $\{S(u,v);\; (u,v)\in X_1\times X_2\}$ is isomorphic to $H$. By
Proposition \ref{pr:bruckConstruction}, we have
\begin{displaymath}
    L((a',b',c'),(a'',b'',c'')) = R((a',b',c'),(a'',b'',c'')) = S(0,a'a'')
\end{displaymath}
and
\begin{displaymath}
    T((a',b',c')) = S(a',b').
\end{displaymath}
This shows at once that $\inn{Q}$ is isomorphic to $H$, and that it consists of
conjugations of $Q$.
\end{proof}

We now construct a ring $X$ for which Proposition \ref{pr:bruckConstruction}
yields a Moufang loop $Q$ of Cs\"org\H{o} type.

Put $R=\mathbb Z_4$, $X=R^7$ and let $\{e_1,\ldots,e_7\}$ be a set of free
generators of the $R$-module $X$. Define the multiplication on the generators
according to
\begin{displaymath}
    \begin{array}{c|ccccccc}
    &e_1&e_2&e_3&e_{4}&e_{5}&e_{6}&e_{7}\\
    \hline
    e_1&0&e_{4}&e_{5}&0&0&e_{7}&0\\
    e_2&-e_{4}&0&e_{6}&0&-e_{7}&0&0\\
    e_3&-e_{5}&-e_{6}&0&e_{7}&0&0&0\\
    e_{4}&0&0&e_{7}&0&0&0&0\\
    e_{5}&0&-e_{7}&0&0&0&0&0\\
    e_{6}&e_{7}&0&0&0&0&0&0\\
    e_{7}&0&0&0&0&0&0&0
    \end{array}
\end{displaymath}
and extend it to $X$ by $R$-linearity. This turns $X$ into an associative ring
satisfying $4X=0$. In order to verify associativity, we observe that all
products $e_i(e_je_k)$ vanish except when $\{i,j,k\}=\{1,2,3\}$, and then
\begin{eqnarray*}
    e_1(e_2e_3)\!=\!e_1e_6\!=\!e_7\!=\!e_4e_3\!=\!(e_1e_2)e_3, &&
    e_1(e_3e_2)\!=\!-e_1e_6\!=\!-e_7\!=\!e_5e_2\!=\!(e_1e_3)e_2, \\
    e_2(e_1e_3)\!=\!e_2e_5\!=\!-e_7\!=\!-e_4e_3\!=\!(e_2e_1)e_3, &&
    e_2(e_3e_1)\!=\!-e_2e_5\!=\!e_7\!=\!e_6e_1\!=\!(e_2e_3)e_1, \\
    e_3(e_1e_2)\!=\!e_3e_4\!=\!e_7\!=\!-e_5e_2\!=\!(e_3e_1)e_2, &&
    e_3(e_2e_1)\!=\!e_3e_4\!=\!-e_7\!=\!-e_6e_1\!=\!(e_3e_2)e_1.
\end{eqnarray*}
Let $X_1=Re_1+Re_2+Re_3$. Then $X_1$ is an additive subgroup of $X$ satisfying
$uu=0$ and $uv+vu=0$ for every $u$, $v\in X_1$. Moreover, we have
$X_2=Re_4+Re_5+Re_6$ and $X_3=Re_7$.

Let $Q$ be the Moufang loop of order $4^7$ constructed from $X$ as in
Proposition \ref{pr:bruckConstruction}. By Proposition \ref{pr:innQchar4},
$\inn(Q)$ is an abelian group of order $4^6$ and exponent $4$. By Proposition
\ref{pr:bruckConstruction}, we have
\begin{displaymath}
    [[(e_1,0,0),(e_2,0,0)],(e_3,0,0)] = (0,0,2e_7) \neq (0,0,0),
\end{displaymath}
which implies that the nilpotency class of $Q$ is at least $3$ (in fact, it is
equal to $3$). Hence $Q$ is a Moufang loop of Cs\"org\H{o} type.

\bibliographystyle{plain}

\begin{thebibliography}{99}

\bibitem{BruckTAMS} R.~H.~Bruck, \emph{Contributions to the theory of loops},
Trans. Amer. Math. Soc. \textbf{60} (1946), 245--354.

\bibitem{Bruck} R.~H.~Bruck, A survey of binary systems, Third printing,
corrected, \emph{Ergebnisse der Mathematik und ihrer Grenzgebiete} \textbf{20},
Springer-Verlag, 1971.

\bibitem{Csorgo} P.~Cs\"org\H o, \emph{Abelian inner mappings and nilpotency class greater than two},
European~J.~Combin. \textbf{28} (2007), 858--868.

\bibitem{CsDr} P.~Cs\"{o}rg\H{o} and A.~Dr\'apal, \emph{Left conjugacy
closed loops of nilpotency class two}, Results Math. \textbf{47} (2005), no.
\textbf{3}-\textbf{4}, 242--265.

\bibitem{DrKi} A.~Dr\'apal and M.~K.~Kinyon,
    \emph{Buchsteiner loops: associators and constructions}, preprint.

\bibitem{DrVo} A.~Dr\'apal and P.~Vojt\v{e}chovsk\'y, \emph{Explicit
    constructions of loops with commuting inner mappings}, European J. Combin.
    \textbf{29} (2008), no. \textbf{7}, 1662--1681.


\bibitem{Hsu} T.~Hsu, \emph{Moufang loops of class $2$ and cubic forms},
Math. Proc. Cambridge Philos. Soc. \textbf{128} (2000), no. \textbf{2}, 197--222.

\bibitem{KePh} T.~Kepka and J.~D.~Phillips, \emph{Connected transversals to
    subnormal subgroups}, Commentationes Mathematicae Universitatis Carolinae
    \textbf{38}, \textbf{2} (1997), 223--230.


\bibitem{Ma} M.~Mazur, \emph{Connected transversals to nilpotent groups},
    J.~Group Theory \textbf{10} (2007), 195--203.

\bibitem{Moufang} R.~Moufang, \emph{Zur Struktur von Alternativk\"{o}rpern}, Math. Ann. \textbf{110} (1935), no. \textbf{1},
416--430.


\bibitem{NiKe2} M.~Niemenmaa and T.~Kepka, \emph{On connected transversals to
    abelian subgroups}, Bull. Austral. Math. Soc. \textbf{49} (1994), no. \textbf{1},
    121--128.

\bibitem{Ni} M.~Niemenmaa, \emph{Finite loops with nilpotent inner mapping
    groups are centrally nilpotent}, to appear in Bull. Austral. Math. Soc.

\end{thebibliography}

\end{document}